\newtheorem {theorem} {Theorem}
\newtheorem {proposition} [theorem]{Proposition}
\newtheorem {corollary} [theorem]{Corollary}
\newtheorem {lemma}  [theorem]{Lemma}
\newcommand{\R}{{\mathbb R}}
\newtheorem{Defi}[theorem]{Definition}
\title{\large\bf On the mixing and Bernoulli properties for geodesic flows on rank $1$ manifolds without focal points}
\author{
\and Fei Liu
\thanks{College of Mathematics and System Science, Shandong University of Science and Technology, Qingdao, 266590, P.R. China.
e-mail: liufei$@$math.pku.edu.cn.}
\and Xiaokai Liu
\thanks{Department of Mathematics, Northwestern University, Evanston, IL 60208, USA.
e-mail: xkliu@math.northwestern.edu}
\and Fang Wang
\thanks{School of Mathematical Sciences, Capital Normal University, Beijing, 100048, China; and Beijing Center for Mathematics and Information
Interdisciplinary Sciences (BCMIIS), Beijing 100048, P.R. China. e-mail: fangwang@cnu.edu.cn.}}
\date{\today}
\begin{document}
\maketitle

\begin{abstract}

	If $(M,g)$ is a smooth compact rank $1$ Riemannian manifold without focal points, it is shown that the measure $\mu_{\max}$ of maximal entropy for the geodesic flow is unique. 
In this article, we study the statistic properties and prove that this unique measure $\mu_{\max}$ is mixing. 
Stronger conclusion that the geodesic flow on the unit tangent bundle $SM$ with respect to $\mu_{\max}$ is Bernoulli is acquired provided $M$ is a compact surface with genus greater than one and no focal points.
\\

\noindent {\bf Keywords and phrases:}  Geodesic flows, focal points, measure of maximal entropy, mixing, Bernoulli property\\

\noindent {\bf AMS Mathematical subject classification (2000):}
37B40, 37D40.
\end{abstract}


\section{\bf Introduction}\label{intro}

\setcounter{section}{1}
\setcounter{equation}{0}\setcounter{theorem}{0}

The geodesic flow plays a significant role in modern theories of both differential geometry and dynamical systems, and, primarily on Riemannian manifolds, has been extensively studied (cf.~\cite{Kn2,Pa} for a comprehensive introduction). Moreover, geodesic flows on manifolds with negative/non-positive curvature are attractive subject nowadays with a bunch of works on the dynamics and ergodic theories done. Among most of these researches, the negative/non-positive curvature condition is critical, which contains deep information such as the convexity of distance function, the structure of fundamental groups, the geometry and topology of covering spaces, etc, and provides us rich research tools.

A natural generalization of the negatively/non-positively curved manifold is the manifold without focal points, which is similar to the manifolds of non-positive curvature on many aspects, but allows the existence of some subsets with positive curvature. By relaxing the curvature condition, some crucial properties, however, no longer hold, resulting in obstacles in studying geodesic flows on the manifolds. Benefiting from the latest progress in geometry and dynamics, we discovered a series of new properties on the dynamics of their geodesic flows on smooth compact manifolds without focal points recently.

In \cite{LW}, the entropy-expansiveness of geodesic flows on manifolds without focal points and surfaces without conjugate points, respectively is shown.
In \cite{LWW1}, it is discovered that on a smooth compact rank $1$ manifold without focal points the geodesic flow has a unique measure of maximal entropy.
Recently, in \cite{LZ}, with X.~Zhu, the first author proved that the geodesic flow on a compact rank $1$ manifold without focal points is transitive. All these works have built a solid foundation for our work on the mixing and Bernoulli properties of geodesic flows on compact rank $1$ Riemannian manifolds without focal points.

Let $(M,g)$ be a connected compact manifold with a complete Riemannian metric $g$. For each point $p \in M$ and unit tangent vector $v \in S_{p}M$, there is a unique geodesic $\gamma_{v}$ satisfying the initial conditions $\gamma_v(0)=p$ and $\gamma'_v(0)=v$. The geodesic flow $\phi=(\phi^{t})_{t\in\mathbb{R}}$ on the unit tangent bundle $SM$ is defined as:
\[
\phi^{t}: SM \rightarrow SM, \qquad (p,v) \mapsto
(\gamma_{v}(t),\gamma'_{v}(t)),\ \ \ \ \forall\ t\in \R .
\]
Throughout this paper, we use $\gamma_{v}\subset M$ to denote the geodesic curve generated by $v\in SM$, and $(\gamma_{v},\gamma'_{v})\subset SM$ to denote the corresponding trajectory of the geodesic flow on $SM$. We also use $\gamma_v'$ to denote the trajectory for notational simplicity.

A $\phi$-invariant probability measure $\mu$ is called the measure of maximal entropy if the measure-theoretic entropy $h_{\mu}(\phi)\geq h_{\nu}(\phi)$ for any other $\phi$-invariant probability measure $\nu$. From the variational principle, we know that $h_{\mu}(\phi)=h_{\text{top}}(g)$, where $h_{\text{top}}(g)$ is the topological entropy of the geodesic flow on $SM$.

The concept of (geometric) rank was first introduced by Ballmann, Brin and Eberlein in \cite{BBE}.
Given a unit vector $v \in SM$, we define the rank of the vector $\mathbf{rank(v)}$ to be the dimension of the space of parallel Jacobi fields along the geodesic curve $\gamma_{v}$,
and the rank of the manifold $\mathbf{rank(M)}:=\min\{\text{rank}(v) \mid v \in SM\}$.
From the definition, we can see $\text{rank}(\gamma'(t))=\text{rank}(\gamma'(0))$, $\forall\ t\in \mathbb{R}$, thus we can define the rank for a geodesic $\gamma$ as $\mathbf{rank(\gamma)}=\text{rank}(\gamma'(0))$. In particular, a rank $1$ geodesic is a geodesic with no parallel perpendicular Jacobi field, and a rank $1$ manifold is a Riemannian manifold admitting a rank $1$ geodesic.

On a rank $1$ manifold $M$, $SM$ splits into two invariant subsets:
the regular set $\mathbf{Reg}:= \{v\in SM \mid \text{rank}(v)=1\}$ and the singular set $\mathbf{Sing}:= SM \setminus \text{Reg}$. Here $\mbox{Reg}$ is an open subset of $SM$.
There has been a well-known conjecture since 1980's that the regular set has full Liouville volume given the compact rank $1$ manifold has non-positive curvature or no focal points. A positive answer to this conjecture will imply the ergodicity of the geodesic flow on such manifolds with respect to the Liouville measure. In \cite{LWW3}, with W. Wu, the first and third authors proved the conjecture with a mild condition and showed that the geodesic flow of a surface without focal points is ergodic with respect to the Liouville measure.

Next we introduce focal points and conjugate points. Let $\gamma$ be a geodesic on $(M,g)$. Two points $p=\gamma(t_{1})$ and $q=\gamma(t_{2})$ on $\gamma$ are called $\mathbf{focal}$ if there is a Jacobi field $J$ along $\gamma$ such that $J(t_{1})=0$, $J'(t_{1})\neq 0$ and $\frac{d}{dt}\| J(t)\|^{2}\mid_{t=t_{2}}=0$. We call $p=\gamma(t_{1})$ and $q=\gamma(t_{2})$ are  $\mathbf{conjugate}$ points if there is a non-identically zero Jacobi field $J$ along $\gamma$ with $J(t_{1})=J(t_{2})=0$. A compact Riemannian manifold $(M,g)$ is called a manifold
{\bf without focal points/without conjugate points} if there is no pair of focal points / conjugate points on any geodesic in $(M,g)$.
Obviously, a manifold without conjugate points is also a manifold without focal points. One can also check that a manifold with non-positive curvature has no focal points. In this sense, the concept of manifolds without focal / conjugate points is a natural and non-trivial generalization of the manifolds with non-positive curvature, because it can be constructed that a compact manifold without focal points whose curvature is not everywhere non-positive (cf.~\cite{Gu}).

In \cite{Kn1}, Knieper studied the dynamics of the geodesic flows on a compact rank $1$ manifold with non-positive curvature, and proved the existence, uniqueness, and ergodicity of the invariant measure of maximal entropy. This result answered Katok's famous conjecture on the existence and uniqueness of the measures of maximal entropy on the setting that the manifold is rank $1$ with non-positive curvature (cf.~\cite{BuKa}).  Recently, the first and third authors and W. Wu, generalized Knieper's work and proved the existence, uniqueness, and ergodicity of the measure of maximal entropy on a compact rank $1$ manifolds without focal points.

\begin{theorem}[cf. Liu-Wang-Wu \cite{LWW1}]\label{LWW}
Suppose $(M,g)$ is a smooth compact rank $1$ Riemannian manifold without focal points,
then the geodesic flow on $(M,g)$ has a unique measure of maximal entropy $\mu_{\max}$.
\end{theorem}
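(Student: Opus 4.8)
The plan is to adapt Knieper's program for rank $1$ non-positively curved manifolds, replacing each appeal to geodesic convexity by the weaker monotonicity of Jacobi fields that the no focal points hypothesis supplies. Pass to the universal cover $\tilde M$ with ideal boundary $\partial\tilde M$ and deck group $\Gamma=\pi_1(M)$, and set $h=h_{\text{top}}$, which by the variational principle equals the critical exponent of the Poincar\'e series $\sum_{\gamma\in\Gamma}e^{-s\,d(p,\gamma q)}$. First I would construct, for each $p\in\tilde M$, a Patterson--Sullivan measure $\mu_p$ on $\partial\tilde M$ as a weak$^{*}$ limit of normalized orbital sums as $s\downarrow h$, and verify that the family $\{\mu_p\}$ is a $\Gamma$-equivariant conformal density of dimension $h$, i.e. $\frac{d\mu_q}{d\mu_p}(\xi)=e^{-h\,b_\xi(q,p)}$ with $b_\xi$ the Busemann cocycle. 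This step rests on the Busemann functions being well defined and $C^{1,1}$-regular and on the divergence of distinct geodesic rays, both of which hold without focal points.

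Next I would prove a Shadow Lemma: the $\mu_p$-measure of the shadow cast on $\partial\tilde M$ by a ball $B(\gamma p,R)$ is comparable, uniformly in $\gamma$, to $e^{-h\,d(p,\gamma p)}$; and, using density of rank $1$ geodesics together with the flat strip theorem for biasymptotic geodesics, that each $\mu_p$ is non-atomic. With the density in hand I would assemble the Bowen--Margulis--Knieper measure: on the $\Gamma$-invariant set of pairs $(\xi,\eta)\in\partial\tilde M\times\partial\tilde M$ joined by a geodesic, form the flip- and $\Gamma$-invariant measure $d\bar\mu(\xi,\eta)=e^{h\,\beta_p(\xi,\eta)}\,d\mu_p(\xi)\,d\mu_p(\eta)$ (with $\beta_p$ the Gromov product at $p$, rendering $\bar\mu$ independent of $p$); coupling with the flow parameter $t\in\mathbb{R}$ and quotienting by $\Gamma$ produces a finite $\phi$-invariant measure $\mu_{\max}$ on $SM$. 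One must arrange the construction on $\mathbf{Reg}$ so that nonuniqueness of connecting geodesics, which occurs exactly inside flat strips, does not spoil well-definedness. A standard computation, using the entropy-expansiveness from \cite{LW} to equate the relevant entropies, then gives $h_{\mu_{\max}}(\phi)=h$, so $\mu_{\max}$ is a measure of maximal entropy.

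For uniqueness, let $\nu$ be any ergodic measure of maximal entropy. I would show first that $\nu(\mathbf{Reg})=1$; the crux is that the geodesic flow restricted to the singular set $\mathbf{Sing}$ has topological entropy strictly smaller than $h$, so no such $\nu$ can concentrate there. On $\mathbf{Reg}$ the flow carries a local product structure, and maximality of the entropy $h$ forces the conditional measures of $\nu$ along unstable horospheres to agree, up to normalization, with the projections of the conformal density $\mu_p$, by a Hopf / Ledrappier--Young type argument. Since $\mu_{\max}$ is built from the same density in the same way, this identifies $\nu$ with $\mu_{\max}$ and yields uniqueness.

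The step I expect to be the main obstacle is the Shadow Lemma and the surrounding geometric comparisons, since the Patterson--Sullivan estimates in the non-positively curved case lean throughout on convexity of the distance and Busemann functions, whereas without focal points one has only monotonicity of stable and unstable Jacobi field norms and merely $C^{1,1}$ horospheres. Turning these weaker inputs into uniform shadow bounds --- and, in the uniqueness step, quantifying the entropy deficit of $\mathbf{Sing}$ so that $\nu$ cannot escape the regular set --- is where the essential new geometric work resides.
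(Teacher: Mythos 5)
Your proposal is correct in outline and takes essentially the same route as the paper's source for this theorem (it is cited from \cite{LWW1} and sketched in Section \ref{maximal}): Knieper's program adapted to the no-focal-points setting, i.e.\ construct the $h$-dimensional Busemann density (Patterson--Sullivan measures) via the Poincar\'e series, form the geodesic current $d\overline{\mu}(\xi,\eta)=e^{h\beta_{p}(\xi,\eta)}d\mu_{p}(\xi)d\mu_{p}(\eta)$ and couple it with the flow direction to get $\mu_{\max}$, then prove maximality and uniqueness using the shadow lemma, the entropy gap on $\mbox{Sing}$, and the product structure on $\mbox{Reg}$. The geometric substitutes you flag (flat strip theorem, $C^{1,1}$ horospheres, divergence of rays in the absence of convexity) are precisely the ingredients used there.
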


In this article, we study the statistical properties of $\mu_{\max}$ and show that it is mixing. Moreover, when $\dim(M)=2$, we prove that $\mu_{\max}$ is also Bernoulli. We organize this paper as following: our two main results will be strictly stated in Section \ref{mainresults}, and proved in Section \ref{mix} and Section \ref{proofBernoulli} respectively. In Section \ref{geometric}, we exhibits some important geometric properties of rank $1$ manifolds without focal points, which will be frequently used in our subsequent discussion. The measure $\mu_{\max}$ of maximal entropy will be studied afterwards in Section \ref{maximal}. In Section \ref{cratio}, the concept of cross ratio will be introduced and discussed.


\section{\bf{Main results}}\label{mainresults}
We present our two main results in this section. First of all, we concern the mixing property of $\mu_{\max}$. Mixing is a basic concept in the statistical theory of dynamical systems. Let $(X,\mathfrak{B})$ be a Borel space, and $(T^t)_{t\in\mathbb{R}}$ be a flow on $X$. An invariant probability measure $\mu$ of the flow $(T^t)_{t\in\mathbb{R}}$ is called mixing if for any two $\mu$ measurable sets $A, B\subset X$,
$$\lim_{t\rightarrow\infty}\mu(A\cap T^t(B))=\mu(A)\mu(B).$$  An equivalent condition of the mixing property is the decay of correlation: for all $f\in L^2(X, \mu)$ with $\int_X f d\mu=0$, $f\circ T^t$ converges to $0$ in the weak-$L^2$ topology (cf.~\cite{Wal}).

In this article, we prove the following theorem:
\begin{theorem}\label{mixing}
The maximal entropy measure $\mu_{\max}$ in Theorem \ref{LWW} is mixing.
\end{theorem}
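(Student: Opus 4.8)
The plan is to follow the strategy developed by Babillot for the mixing of Patterson--Sullivan (Bowen--Margulis) measures, adapted to the no-focal-points setting. The argument rests on three ingredients: the local product structure of $\mu_{\max}$, its ergodicity, and a dichotomy asserting that an ergodic measure carrying such a product structure is mixing unless the geodesic flow is a constant-time suspension (equivalently, the length spectrum is arithmetic). I would use Sections \ref{maximal} and \ref{cratio} to assemble these ingredients and then combine them here.

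First I would record the ergodicity of $\mu_{\max}$. Since entropy is affine on the simplex of invariant measures and is maximized at $\mu_{\max}$, almost every ergodic component of $\mu_{\max}$ is again a measure of maximal entropy; the uniqueness in Theorem \ref{LWW} then forces $\mu_{\max}$ itself to be ergodic. Next I would recall Knieper's description of $\mu_{\max}$ in terms of the Patterson--Sullivan measures $\{\mu_p\}_{p\in\tilde M}$ on the boundary at infinity $\tilde M(\infty)$ of the universal cover, as carried out in \cite{LWW1}. Through the Hopf parametrization $v\mapsto(\gamma_v(-\infty),\gamma_v(+\infty),s)$ --- valid without focal points because the Busemann functions, and hence the stable and unstable horospheres, are well defined and $C^1$ --- the measure $\mu_{\max}$ acquires the local product form $d\mu^s\,d\mu^u\,ds$, where $\mu^s,\mu^u$ are the conditional measures along the stable and unstable manifolds induced by $\{\mu_p\}$. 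I would also record that these conditionals are quasi-invariant under the stable/unstable holonomies, with Radon--Nikodym cocycle controlled by differences of Busemann functions.

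With these in hand I would run the Babillot dichotomy. Assuming $\mu_{\max}$ is not mixing, one extracts a non-mixing sequence $t_n\to\infty$ and passes to weak-$*$ limits of the pushed-forward conditional measures. The contraction of stable manifolds and the expansion of unstable manifolds under the flow force any surviving obstruction to lie in the flow direction, so that the limits are governed by a closed subgroup $H\subseteq\mathbb{R}$ of ``asymptotic periods''. If $H=\mathbb{R}$, the correlations decay and we are done; the remaining possibility $H=c\mathbb{Z}$ would make the flow a suspension over a section with constant return time. Here the cross ratio of Section \ref{cratio} enters: it measures the relative synchronization of geodesics sharing endpoints, and its values along the Patterson--Sullivan measures determine the admissible periods. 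Using the existence of transverse rank $1$ geodesics together with the full support of $\{\mu_p\}$, I would show via cross-ratio identities that these periods are not contained in any discrete $c\mathbb{Z}$ --- equivalently, that the length spectrum is non-arithmetic --- so that $H$ cannot be a proper subgroup. This yields $H=\mathbb{R}$ and establishes mixing.

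The main obstacle is precisely the weakness of the hyperbolicity in the absence of focal points. Unlike the strictly negatively curved or Anosov case, the stable/unstable foliations are merely continuous (the Busemann functions only $C^{1,1}$), the contraction along stable manifolds need not be exponential, and the flat strips arising from the singular set must be handled separately. Consequently, both the rigorous construction of the local product structure with quasi-invariant conditionals and the cross-ratio estimates used to exclude arithmeticity must be rebuilt from the divergence and convexity properties of Jacobi fields available without focal points (Section \ref{geometric}), rather than from uniform exponential estimates. Controlling the period group $H$ --- that is, excluding a nontrivial rotation factor --- is the crux of the argument.
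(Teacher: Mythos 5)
Your overall strategy---Babillot's method combined with the product structure of $\mu_{\max}$, ergodicity, and the cross ratio---is the same strategy the paper follows, and your ergodicity argument via affinity of entropy is fine. But there is a genuine gap at the center of the proposal: you invoke, as if it were an available ingredient, a dichotomy of the form ``an ergodic measure with local product structure is mixing unless the flow is a constant-time suspension (equivalently the length spectrum is arithmetic),'' and you reduce the problem to showing the period group $H$ is not discrete. For a general ergodic flow, failure of \emph{mixing} does not imply any rotation factor or suspension structure---that would conflate mixing with weak mixing---and for measures with this product structure the dichotomy \emph{is} the theorem to be proven; it is not in the literature for rank $1$ manifolds without focal points. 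The paper manufactures the periodicity by hand: assuming non-mixing, Lemma \ref{Babillot} produces a \emph{single} sequence $s_n\to\infty$ and a non-constant $\psi\in L^2$ with $f\circ\phi^{s_n}\to\psi$ \emph{and} $f\circ\phi^{-s_n}\to\psi$ weakly (the two-sidedness along one sequence is the crucial point), then Banach--Saks plus the smoothing $\tilde\psi^*(v)=\int_0^\varepsilon\tilde\psi(\phi^t(v))\,dt$ yield a function, continuous along orbits and not a.e.\ constant, which is invariant along $W^s$ and along $W^u$. Your ``weak-$*$ limits of pushed-forward conditional measures'' and the assertion that ``the obstruction lies in the flow direction'' do not substitute for this construction, and no argument is offered for either.

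Two further steps that you correctly flag as obstacles are left unresolved, and they are precisely where the no-focal-points geometry enters. First, $W^{s/u}(v)$ are genuinely asymptotic only for rank $1$ \emph{recurrent} vectors $v$ (Lemma \ref{asymptotic}), so the $W^{s,u}$-invariance of the limit function is only available on recurrent rank $1$ trajectories; the paper uses the product form of the geodesic current $\overline{\mu}$ and a Fubini argument (Lemma \ref{lem1}) to show that quadrilaterals can be chosen with all four sides in this good set, so that the holonomy $v_0\mapsto v_1\mapsto v_2\mapsto v_3\mapsto v_4=\phi^{\tau}(v_0)$ actually transports values of $\tilde\psi^*$. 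Second, your non-arithmeticity step needs a concrete mechanism: it is not enough to gesture at ``cross-ratio identities'' forcing the periods outside every $c\mathbb{Z}$. The paper's mechanism is that the cross ratio is strictly positive for intersecting rank $1$ geodesics (Proposition \ref{positive}), is continuous on $\mathcal{Q}$ (Corollary \ref{continuous}), and degenerates to $Cr(\xi,\xi,\eta,\eta)=0$, so along a fixed good trajectory the periods of $\tilde\psi^*$ contain arbitrarily small positive numbers; continuity of $\tilde\psi^*$ along the orbit then forces it to be constant there, and minimality of the $\Gamma$-action on $X(\infty)$ together with ergodicity of $\mu_{\max}$ give a.e.\ constancy, contradicting Lemma \ref{Babillot}. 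Without these two arguments the outline does not close.
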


This theorem  generalizes M.~Ballitot's classical result in \cite{B}, which states that Knieper's invariant measure of maximal entropy in \cite{Kn1} for the geodesic flow on a rank $1$ manifolds with non-positive curvature is mixing. A useful tool in proving our theorem is Babillot's Lemma 1 in \cite{B}. We cite this lemma here:

\begin{lemma}[cf. Babillot \cite{B}]\label{Babillot}
Suppose $(X,\mathfrak{B})$ is a Borel space,  $\mu$ is a probability measure on  $(X,\mathfrak{B})$, and $(T^t)_{t\in\mathbb{R}~\mbox{or}~\mathbb{Z}}$ is a dynamical system on $X$ preserving $\mu$. Let $f\in L^2(X,\mu)$ be a function with $0$ $\mu$-average: $\int fd\mu=0$. If $f\circ T^t$ does not converge to $0$ in the weak-$L^2$ topology, then there is a sequence $t_n\rightarrow\infty$ and a function $\psi\in L^2(X,\mu)$ which is not $\mu$-almost everywhere constant, such that $$f\circ T^{t_n}\rightarrow \psi~~\&~~f\circ T^{-t_n}\rightarrow\psi,~~n\rightarrow+\infty,$$ in the weak-$L^2$ topology.
\end{lemma}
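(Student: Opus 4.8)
\emph{Proof proposal.} The plan is to recast everything through the Koopman operator and the spectral theorem, and then to isolate a single, purely functional-analytic difficulty. Write $U^t f := f\circ T^t$; since $T^t$ preserves $\mu$, $(U^t)$ is a strongly continuous one-parameter group of unitaries of $L^2(X,\mu)$. Let $H_f=\overline{\mathrm{span}}\{U^t f: t\in\R\}$ be the cyclic subspace generated by $f$; it is $U$-invariant and, because $\int f\,d\mu=0$ and the constants are $U$-fixed, $H_f$ is orthogonal to the constants. By the spectral theorem for the cyclic vector $f$ there is a finite positive measure $\sigma$ on $\R$ (the spectral measure of $f$) and a unitary isomorphism $W:L^2(\R,\sigma)\to H_f$ with $W\mathbf 1=f$ and $W(e^{it\lambda})=U^t f$; under $W$ the operator $U^t$ becomes multiplication by $\xi_t(\lambda):=e^{it\lambda}$. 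I would first record that $U^tf\to 0$ weakly is \emph{equivalent} to the correlation $\langle U^t f,f\rangle\to 0$ (the orbit of $f$ spans $H_f$, and $U^tf\perp H_f^{\perp}$), so the hypothesis yields a sequence $t_n\to+\infty$, a function $g\in H_f$ and $\varepsilon>0$ with $|\langle \xi_{t_n},\tilde g\rangle_\sigma|\ge\varepsilon$, where $\tilde g=W^{-1}g$.

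Next I would extract a weak-$*$ cluster value. Since $\|\xi_t\|_{L^\infty(\sigma)}=1$ and the unit ball of $L^\infty(\sigma)=(L^1(\sigma))^*$ is weak-$*$ compact and metrizable ($L^1(\sigma)$ being separable), one may pass to a subsequence so that $\xi_{t_n}\to u_0$ in the weak-$*$ topology of $L^\infty(\sigma)$, with $u_0\in L^\infty(\sigma)$, $|u_0|\le 1$. The crucial structural point is that a weak-$*$ limit of the unimodular functions $\xi_{t_n}$ is again an $L^\infty$ function (equivalently, the multiplication algebra is weak-operator closed), so $u_0$ is an honest bounded function of $\lambda$. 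Testing against $\tilde g$ gives $|\langle u_0,\tilde g\rangle_\sigma|\ge\varepsilon$, whence $u_0\ne 0$.

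The heart of the argument, and the step I expect to be the main obstacle, is to replace $u_0$ by a \emph{real, nonzero} cluster value still reached along times tending to $+\infty$; this is exactly what will force the forward and backward limits to coincide. I would exploit the multiplicative structure on the spectral side. Conjugation is weak-$*$ continuous, so $\overline{\xi_{t_n}}=\xi_{-t_n}\to\overline{u_0}$, and multiplication by a fixed $L^\infty$ function is weak-$*$ continuous. Hence, for the doubly-indexed family $\xi_{t_m-t_n}=\xi_{t_m}\,\overline{\xi_{t_n}}$, the iterated weak-$*$ limit (first $m\to\infty$, then $n\to\infty$) is
\[
\lim_{n\to\infty}\Big(\lim_{m\to\infty}\xi_{t_m-t_n}\Big)=\lim_{n\to\infty}\big(u_0\,\overline{\xi_{t_n}}\big)=u_0\,\overline{u_0}=|u_0|^2 .
\]
Because the weak-$*$ ball is metrizable, an iterated limit is attained along a diagonal sequence: choosing $m_k\gg n_k$ so that $r_k:=t_{m_k}-t_{n_k}\to+\infty$, one obtains $\xi_{r_k}\to \varphi:=|u_0|^2$ weak-$*$, with $\varphi$ real, $0\le\varphi\le 1$, and $\varphi\ne 0$.

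Finally I would translate back. Weak-$*$ convergence in $L^\infty(\sigma)$ implies weak convergence in $L^2(\sigma)$ (as $\sigma$ is finite), so $\xi_{r_k}\rightharpoonup\varphi$ and, $\varphi$ being real, $\overline{\xi_{r_k}}=\xi_{-r_k}\rightharpoonup\overline{\varphi}=\varphi$. Applying the weakly continuous isomorphism $W$ gives $U^{r_k}f=W\xi_{r_k}\rightharpoonup\psi$ and $U^{-r_k}f=W\xi_{-r_k}\rightharpoonup\psi$ with the \emph{same} $\psi:=W\varphi$. Here $\psi\ne 0$ since $\varphi\ne0$ and $W$ is isometric, while $\langle\psi,\mathbf 1\rangle=\lim_k\langle U^{r_k}f,\mathbf 1\rangle=\int f\,d\mu=0$; a nonzero function of zero $\mu$-average cannot be $\mu$-a.e. constant, so $\psi$ is not constant. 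Setting $t_n:=r_n$ then produces exactly the sequence and function required by the lemma. The only genuinely delicate point is the passage from an arbitrary nonzero forward cluster value $u_0$ to the real value $|u_0|^2$ reached along a forward-tending sequence; the rest is a routine use of the spectral theorem and weak-$*$ compactness.
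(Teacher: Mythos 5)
Your proposal is correct, and it follows essentially the same route as the paper's source: the paper does not reprove this lemma but cites Babillot's Lemma 1, whose proof is exactly your spectral-theorem argument --- restrict to the cyclic subspace of $f$, extract a nonzero weak-$*$ cluster value $u_0$ of the characters $e^{it_n\lambda}$ in $L^\infty(\sigma)$, and use the difference trick $\xi_{t_m-t_n}=\xi_{t_m}\overline{\xi_{t_n}}$ with a diagonal extraction to reach the real limit $|u_0|^2$ along times tending to $+\infty$, which is what forces the forward and backward weak limits to coincide and to be nonconstant (nonzero but of zero mean). There are no gaps; your use of metrizability of the weak-$*$ ball and the final translation back through the unitary $W$ match the cited proof.
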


A measure preserving system $(T, \mu)$ on $(X,\mathfrak{B})$ is called Bernoulli if it is conjugate to a Bernoulli shifting. A $\mu$-preserving flow $(T^t)_{t\in \mathbb{R}}$ is called Bernoulli if $(T^1, \mu)$ is Bernoulli.  It is easy to check that Bernoulli implies mixing. Our next theorem shows that when $\dim(M)=2$, the measure $\mu_{\max}$ in Theorem \ref{LWW} is Bernoulli. This is a extension of the result of Ledrappier, Lima and Sarig in \cite{LLS}.

\begin{theorem}\label{Bernoulli}
For a compact surface with genus greater than $1$ and without focal points, the geodesic flow is Bernoulli
with respect to the unique maximal entropy measure $\mu_{\max}$ in Theorem \ref{LWW}.
\end{theorem}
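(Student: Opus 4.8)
The plan is to deduce Theorem \ref{Bernoulli} from the general dichotomy of Ledrappier, Lima and Sarig \cite{LLS} for hyperbolic measures of smooth three-dimensional flows, which asserts that such a measure is either Bernoulli or measure-theoretically isomorphic to the product of a Bernoulli flow with a rotation. Since the rotational factor is excluded precisely when the system is mixing, and we have already established in Theorem \ref{mixing} that $\mu_{\max}$ is mixing, the entire task reduces to checking that $(\phi^t,\mu_{\max})$ satisfies the hypotheses of the Ledrappier-Lima-Sarig theorem. The decisive hypothesis to verify is that $\mu_{\max}$ is hyperbolic.

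First I would record the structural hypotheses, all of which are immediate in the present setting. Because $M$ is a smooth compact surface, the unit tangent bundle $SM$ is a smooth compact three-dimensional manifold and the geodesic flow $\phi=(\phi^t)_{t\in\R}$ is a $C^\infty$ flow on it; moreover $\mu_{\max}$ is the equilibrium state of the (H\"older continuous) zero potential. The measure $\mu_{\max}$ is ergodic, being mixing by Theorem \ref{mixing}. Finally, $\mu_{\max}$ has positive entropy: since $M$ has genus greater than $1$ the topological entropy of the geodesic flow is positive, and by the variational principle $h_{\mu_{\max}}(\phi)=h_{\mathrm{top}}(g)>0$.

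The heart of the argument is the hyperbolicity of $\mu_{\max}$. Consider the flip involution $\iota\colon SM\to SM$, $\iota(v)=-v$, which reverses geodesics and therefore conjugates the flow to its inverse, $\iota\circ\phi^t=\phi^{-t}\circ\iota$ for all $t\in\R$. Since $\iota$ is a diffeomorphism preserving measure-theoretic entropy, $\iota_*\mu_{\max}$ is again a measure of maximal entropy, whence $\iota_*\mu_{\max}=\mu_{\max}$ by the uniqueness in Theorem \ref{LWW}. Differentiating the conjugacy relation shows that the Lyapunov exponents of $\phi^t$ at $v$ are the negatives of those at $\iota(v)$; combined with the $\iota$-invariance and ergodicity of $\mu_{\max}$, this forces the $\mu_{\max}$-almost everywhere Lyapunov spectrum on the three-dimensional $T_vSM$ to be symmetric. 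One exponent is always $0$, corresponding to the flow direction, so the spectrum must have the form $\{-\lambda,0,\lambda\}$ for some $\lambda\ge 0$. Ruelle's inequality gives $h_{\mu_{\max}}(\phi)\le\lambda$, and since the left-hand side is strictly positive we conclude $\lambda>0$. Thus $\mu_{\max}$ has no zero Lyapunov exponent transverse to the flow direction, i.e. it is hyperbolic. As a geometric sanity check, $\mu_{\max}$ gives full measure to $\mathrm{Reg}$ by the construction in \cite{LWW1}, so $\mu_{\max}$-almost every vector is rank $1$ and admits genuine stable and unstable Jacobi fields, consistent with $\lambda>0$.

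With all hypotheses verified, the Ledrappier-Lima-Sarig dichotomy applies: $(\phi^t,\mu_{\max})$ is either Bernoulli or the product of a Bernoulli flow and a rotation. The mixing property of $\mu_{\max}$ from Theorem \ref{mixing} rules out a nontrivial rotational factor, so $(\phi^t,\mu_{\max})$ is Bernoulli, completing the proof. The principal obstacle in this scheme is the hyperbolicity step: in the absence of a global curvature sign the transverse exponents are not visibly nonzero from the geometry, and the argument instead extracts them softly from the uniqueness of $\mu_{\max}$ (which yields flip invariance), the symmetry of the Lyapunov spectrum, and Ruelle's inequality. A secondary point requiring care is confirming that the smoothness and positive-entropy requirements of the symbolic coding underlying \cite{LLS} are met, which they are since $g$ is smooth and $h_{\mathrm{top}}(g)>0$.
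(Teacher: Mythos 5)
Your proposal is correct and rests on the same pillar as the paper's own proof: both reduce Theorem \ref{Bernoulli} to the Ledrappier--Lima--Sarig theory of equilibrium measures for smooth three-dimensional flows \cite{LLS}, applied to the unit tangent bundle $SM$. The differences are in which hypotheses get verified and how, and one of them is a gap you should fill. The paper's proof begins by using Gauss--Bonnet: genus greater than $1$ forces a region of negative curvature, hence the existence of rank $1$ geodesics, so the surface is a rank $1$ manifold. This step is absent from your write-up, yet it is what legitimizes every appeal you make to Theorem \ref{LWW} (existence and uniqueness of $\mu_{\max}$, which your flip-involution argument relies on) and to Theorem \ref{mixing}; without it, the objects in your proof are not known to exist, so a complete argument must include it.

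The remaining differences cut in your favor or are neutral. You verify hyperbolicity of $\mu_{\max}$ by combining the flip involution $\iota(v)=-v$, uniqueness of the measure of maximal entropy, and Ruelle's inequality; this is valid, but heavier than necessary, since for a three-dimensional flow positive entropy alone forces hyperbolicity: applying Ruelle's inequality to $\phi^{1}$ and to $\phi^{-1}$ (which has the same entropy) produces one positive and one negative transverse exponent, with no need for spectrum symmetry or uniqueness. The paper never discusses hyperbolicity explicitly, contenting itself with positive entropy (via the exponential growth of the fundamental group and Dinaburg's theorem \cite{Di}) and with ergodicity and full mass on the regular set from \cite{LWW1}. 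Finally, you explicitly invoke Theorem \ref{mixing} to exclude the rotational factor in the Bernoulli-or-Bernoulli-times-rotation dichotomy, whereas the paper's one-line citation of Theorem 1.2 of \cite{LLS} leaves this step implicit; making the mixing step explicit is the more careful route in the no-focal-points setting, since excluding the rotational factor is exactly where mixing (Babillot's theorem \cite{B} in the nonpositively curved case, Theorem \ref{mixing} here) enters the Ledrappier--Lima--Sarig scheme.
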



\section{\bf Some properties of rank $1$ manifolds without focal points}\label{geometric}
\setcounter{equation}{0}\setcounter{theorem}{0}

In this section, we present some geometric results on rank $1$ manifolds without focal points that will be used throughout the paper.

Let $(M,g)$ be a compact Riemannian manifold and $X$ be the universal covering manifold of $M$. Let $d$ be the distance function on $X$ induced by the lifted Riemannian metric $\tilde{g}$ on $X$. Suppose $h_{1}$ and $h_{2}$ are both geodesics in $X$. We call $h_{1}$ and $h_{2}$ are $\mathbf{positively ~asymptotic}$ if there is a positive number $C > 0$ such that
\begin{equation}\label{e1}
d(h_{1}(t),h_{2}(t)) \leq C, ~~\forall~ t \geq 0.
\end{equation}
Similarly, we say $h_{1}$ and $h_{2}$ are $\mathbf{negatively ~asymptotic}$ if \eqref{e1} holds for all $t \leq 0$.
$h_{1}$ and $h_{2}$ are said to be $\mathbf{biasymptotic}$ if they are both positively asymptotic and negatively asymptotic.
The positive / negative asymptoticity builds an equivalence relation on the geodesics on $X$. We denote the equivalent class positively / negatively asymptotic to a given geodesic $\gamma$ by $\gamma(+\infty)$ / $\gamma(-\infty)$ respectively,  and call these classes $\mathbf{points ~at ~infinity}$.
Obviously, we have $\gamma_{v}(-\infty)=\gamma_{-v}(+\infty)$. We use $X(\infty)$ to denote the set of all points at infinity,
and call it the $\mathbf{boundary~ at ~infinity}$.

Now we introduce some notations. Let $\overline{X}=X \cup X(\infty)$.
For each point $p \in X$ and $v\in S_{p}X$, each points $x, y \in \overline{X}-\{p\}$, positive numbers $\epsilon$ and $r$,
the following notations will be used:
\begin{itemize}
\item{} ~~$\gamma_{p,x}$ is the geodesic from $p$ to $x$ with $\gamma_{p,x}(0)=p$.
\item{} ~~$\measuredangle_{p}(x,y)=\measuredangle(\gamma'_{p,x}(0),\gamma'_{p,y}(0))$.
\item{} ~~$TC(v,\epsilon,r)= \{q \in \overline{X}\mid \measuredangle_{p}(\gamma_{v}(+\infty),q)< \epsilon\} - \{q \in \overline{X}\mid d(p,q)\leq r\}$.
\end{itemize}

The set $TC(v,\epsilon,r)\subset \overline{X}$ is called the $\mathbf{truncated ~cone}$ with axis $v$ and angle $\epsilon$. No hard to check $\gamma_{v}(+\infty) \in TC(v,\epsilon,r)$. The so called $\mathbf{cone ~topology}$ is the unique topology $\tau$ on $\overline{X}$ such that for each $\xi \in X(\infty)$
the set of truncated cones containing $\xi$ forms a local basis for $\tau$ at $\xi$. Under the cone topology, $\overline{X}$ is homeomorphic to the closed unit ball in $\mathbb{R}^{\text{dim}(X)}$,
and $X(\infty)$ is homeomorphic to the unit sphere $\mathbb{S}^{\text{dim}(X)-1}$.
For more details about the cone topology, see \cite{Eb1}.

The first proposition in this section is the property of \emph{the continuity at infinity}.
\begin{proposition}[cf. Liu-Wang-Wu \cite{LWW1}]\label{pro1}
 Let $X$ be a simply connected manifold without focal points, the following map is continuous.
 \begin{equation*}
	 \begin{aligned}
		 \Psi: SX \times [-\infty,+\infty] &\rightarrow \overline{X}\\
		 (v,t)\quad &\mapsto \gamma_{v}(t)
	 \end{aligned}
 \end{equation*}
\end{proposition}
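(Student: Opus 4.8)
\emph{Proof proposal.}
The plan is to reduce to sequential continuity and then treat the finite-time and infinite-time parts separately, with all the genuine geometry entering only at the points $(v_0,\pm\infty)$. Since $SX$, $[-\infty,+\infty]$, and $\overline{X}$ (the latter being, in the cone topology, homeomorphic to a closed ball) are all metrizable, it suffices to show that $(v_n,t_n)\to(v_0,t_0)$ forces $\gamma_{v_n}(t_n)\to\gamma_{v_0}(t_0)$ in $\overline{X}$. When $t_0$ is finite this is immediate: the cone topology restricts to the manifold topology on $X\subset\overline{X}$, and $(v,t)\mapsto\gamma_v(t)$ is continuous on $SX\times\R$ by smooth dependence of solutions of the geodesic equation on initial data and time. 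By the symmetry $\gamma_v(-\infty)=\gamma_{-v}(+\infty)$ it then remains to treat $t_0=+\infty$, and since the cone topology is independent of the base point (cf.~\cite{Eb1}) I may test convergence using truncated cones based at the single point $o:=\gamma_{v_0}(0)$; note that the ray from $o$ to $\xi:=\gamma_{v_0}(+\infty)$ is exactly $\gamma_{v_0}|_{[0,+\infty)}$, so its initial direction is $v_0$.

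Fix a sequence $(v_n,t_n)\to(v_0,+\infty)$ and, first assuming each $t_n$ finite, write $q_n:=\gamma_{v_n}(t_n)$, $o_n:=\gamma_{v_n}(0)\to o$, and $\ell_n:=d(o,q_n)$. To conclude $q_n\to\xi$ in the cone topology I must verify two things: that $q_n$ escapes every ball, i.e.\ $\ell_n\to+\infty$, and that $\measuredangle_o(\xi,q_n)\to 0$. The first is routine: in a simply connected manifold without focal points there are no conjugate points, so geodesics minimize and $d(o_n,q_n)=t_n$; hence $\ell_n\ge t_n-d(o,o_n)\to+\infty$.

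The crux is the angle estimate, and this is the only place where the no-focal-points hypothesis is essential, through the convexity of $t\mapsto d(\sigma_1(t),\sigma_2(t))$ for unit-speed geodesics $\sigma_1,\sigma_2$ (a property recorded in Section~\ref{geometric}). Let $u_n:=\gamma'_{o,q_n}(0)\in S_oX$ be the direction from $o$ to $q_n$; since $S_oX$ is compact it is enough to show that every subsequential limit $u_n\to u_*$ satisfies $u_*=v_0$. For such a subsequence set $\rho(\tau):=d\big(\gamma_{u_n}(\ell_n-\tau),\gamma_{v_n}(t_n-\tau)\big)$, the distance between two reverse-parametrised unit-speed geodesics, hence a convex function of $\tau$. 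One has $\rho(0)=d(q_n,q_n)=0$, while $|\ell_n-t_n|\le d(o,o_n)\to0$ gives $\rho(\ell_n)=d\big(o,\gamma_{v_n}(t_n-\ell_n)\big)\to d(o,o)=0$. Because a convex function on $[0,\ell_n]$ is bounded by its endpoint values, $\rho(\tau)\le\rho(\ell_n)$ throughout; substituting $\tau=\ell_n-s$ yields $d\big(\gamma_{u_n}(s),\gamma_{v_n}(t_n-\ell_n+s)\big)\le\rho(\ell_n)$ for every $s\in[0,\ell_n]$. Letting $n\to\infty$ along the subsequence, the left-hand arguments converge to $\gamma_{u_*}(s)$ and $\gamma_{v_0}(s)$ (using $t_n-\ell_n\to0$ and continuity of the flow on $SX\times\R$) while the right side tends to $0$; thus $\gamma_{u_*}(s)=\gamma_{v_0}(s)$ for all $s\ge0$, so $u_*=v_0$. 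Hence $u_n\to v_0$, i.e.\ $\measuredangle_o(\xi,q_n)\to0$, settling the case of finite $t_n$.

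Finally, a general sequence may also contain indices with $t_n=+\infty$, for which $q_n=\gamma_{v_n}(+\infty)\in X(\infty)$; here I would run the analogous comparison between the ray from $o$ to $\gamma_{v_n}(+\infty)$ and the geodesic $\gamma_{v_n}$, invoking the existence and uniqueness of the asymptotic ray issuing from a fixed point (again valid without focal points) to identify every subsequential limit direction as $v_0$. Splitting an arbitrary sequence into its finite- and infinite-$t_n$ parts and applying the two arguments then gives $\Psi(v_n,t_n)\to\xi$. I expect the convexity-based angle estimate to be the only substantial step: the escape-to-infinity and finite-time statements are soft, and the remaining risk lies in the book-keeping of base-point independence and of the $t_n=+\infty$ indices, both of which are standard once the convexity tool is in hand.
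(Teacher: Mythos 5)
Your overall architecture is sound and is essentially the standard one (note that the paper itself does not prove Proposition \ref{pro1} at all --- it is quoted from \cite{LWW1} --- so the comparison here is against the expected argument rather than a printed proof): metrizability reduces everything to sequences, the finite-time case is smooth dependence on initial conditions, and the case $t_0=+\infty$ is handled by an escape-to-infinity estimate plus an angle estimate at a fixed base point. However, there is one genuine gap, located exactly at the step you yourself call the crux. You justify the bound $\rho(\tau)\le\rho(\ell_n)$ by asserting that $t\mapsto d(\sigma_1(t),\sigma_2(t))$ is \emph{convex} for any two geodesics, and you claim this property is ``recorded in Section~\ref{geometric}.'' No such statement appears in Section~\ref{geometric}, and none could: convexity of the distance between arbitrary geodesics (Busemann convexity) is, for Riemannian manifolds, equivalent to nonpositive sectional curvature, whereas the whole point of the no-focal-points class is that it admits manifolds with regions of positive curvature (Gulliver's examples \cite{Gu}, cited in the paper). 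Along a pair of geodesics crossing such a region the separation is locally strictly concave, so the tool you invoke is false in the present setting; this is precisely the ``crucial property that no longer holds'' which the introduction warns about.

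Fortunately, your particular application can be repaired, because the two geodesics you compare are not arbitrary: $\gamma_{u_n}$ and $\gamma_{v_n}$ both pass through the common point $q_n$, which is where your reverse parametrization puts $\tau=0$ and where $\rho(0)=0$. The correct property of manifolds without focal points --- essentially the integrated form of the definition, namely that a Jacobi field vanishing at a point has nondecreasing norm, so that geodesics emanating from a common point diverge monotonically (cf.~O'Sullivan \cite{Os}, and the list of basic properties in \cite{LWW1}) --- states that if $\sigma_1(0)=\sigma_2(0)$ then $t\mapsto d(\sigma_1(t),\sigma_2(t))$ is nondecreasing on $[0,\infty)$. Applied to the reversed geodesics $\tau\mapsto\gamma_{u_n}(\ell_n-\tau)$ and $\tau\mapsto\gamma_{v_n}(t_n-\tau)$, both issuing from $q_n$, monotonicity gives exactly $\rho(\tau)\le\rho(\ell_n)$ on $[0,\ell_n]$, and the rest of your limit argument ($\rho(\ell_n)\to 0$, hence $\gamma_{u_*}(s)=\gamma_{v_0}(s)$ for all $s\ge 0$, hence $u_*=v_0$) goes through verbatim; so the finite-$t_n$ case is complete once ``convexity'' is replaced by ``monotone divergence from a common point.'' For the indices with $t_n=+\infty$ your sketch is vaguer and would again require a convexity-free tool (nonincreasing distance between positively asymptotic geodesics); a cleaner route is a diagonal argument: since $\overline{X}$ is metrizable and $\gamma_{v_n}(s)\to\gamma_{v_n}(+\infty)$ in the cone topology as $s\to+\infty$, choose finite $s_n\ge n$ with $d_{\overline{X}}\bigl(\gamma_{v_n}(s_n),\gamma_{v_n}(+\infty)\bigr)<1/n$ and apply the finite-time case to the sequence $(v_n,s_n)$.
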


The following result was first discovered by Ballmann in \cite{Ba}, and then extended by Watkins and Liu-Wang-Wu independently.

\begin{proposition}[cf. Watkins \cite{Wat}; Liu-Wang-Wu \cite{LWW1}]\label{pro6}
Let $X$ be a simply connected manifold without focal points and $v\in SX$ be a unit vector with $rank(v)=1$.
Then for any $\epsilon > 0$ and $a \neq 0$, there are neighborhoods $U_{\epsilon}$ of $\gamma_{v}(-\infty)$ and $V_{\epsilon}$ of $\gamma_{v}(+\infty)$ such that for each pair $(\xi,\eta)\in U_{\epsilon}\times V_{\varepsilon}$,
there is a rank $1$ geodesic $\gamma_{\xi,\eta}$ connecting $\xi$ and $\eta$ with $\gamma_{\xi,\eta}(-\infty)=\xi$ and $\gamma_{\xi,\eta}(+\infty)=\eta$, and $d(\gamma_{v}(t),\gamma_{\xi,\eta})<\epsilon$ for any $t$ between $0$ and $a$.
\end{proposition}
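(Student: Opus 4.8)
The plan is to argue by contradiction and to reduce the statement to two structural facts about a rank $1$ geodesic: that it is the \emph{unique} geodesic joining its two endpoints at infinity (absence of a flat strip), and that nearby vectors stay regular because $\mathrm{Reg}$ is open. Suppose the conclusion fails. Then there are $\epsilon_{0}>0$, $a\neq 0$, and sequences $\xi_{n}\to\gamma_{v}(-\infty)$ and $\eta_{n}\to\gamma_{v}(+\infty)$ in $\overline{X}$ such that no rank $1$ geodesic joining $\xi_{n}$ to $\eta_{n}$ stays within $\epsilon_{0}$ of $\gamma_{v}$ on the interval between $0$ and $a$. The aim is to produce, for all large $n$, a rank $1$ geodesic connecting $\xi_{n}$ and $\eta_{n}$ that is $\epsilon_{0}$-close to $\gamma_{v}$ on that interval, contradicting this choice.

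First I would construct \emph{some} geodesic $c_{n}$ joining $\xi_{n}$ to $\eta_{n}$ by a limiting argument. Fix $p=\gamma_{v}(0)$ and let $r_{\xi_{n}}$, $r_{\eta_{n}}$ be the geodesic rays issuing from $p$ toward $\xi_{n}$, $\eta_{n}$; set $a_{k}=r_{\xi_{n}}(k)$ and $b_{k}=r_{\eta_{n}}(k)$, so that $a_{k}\to\xi_{n}$ and $b_{k}\to\eta_{n}$ in $\overline{X}$. Since $X$ is simply connected without focal points, hence without conjugate points, the exponential map is a diffeomorphism and there is a unique geodesic segment $\sigma_{k}=[a_{k},b_{k}]$. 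Because $\xi_{n}$ and $\eta_{n}$ are close to the antipodal pair $\gamma_{v}(\mp\infty)$, the angle $\measuredangle_{p}(\xi_{n},\eta_{n})$ is close to $\pi$; I would use this to bound the distance from $p$ to $\sigma_{k}$ uniformly in $k$, so that after reparametrising $\sigma_{k}$ by the foot of the perpendicular from $p$, the velocities $\sigma_{k}'(0)$ remain in a fixed compact subset of $SX$. Arzel\`a--Ascoli then yields a subsequential limit $c_{n}$, a complete geodesic meeting a fixed ball about $p$, and a standard limiting argument based on the continuity at infinity (Proposition \ref{pro1}) identifies its endpoints as $c_{n}(-\infty)=\xi_{n}$ and $c_{n}(+\infty)=\eta_{n}$.

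It remains to show $c_{n}\to\gamma_{v}$. The base points $c_{n}(0)$ lie in a fixed ball and the velocities $c_{n}'(0)$ in a fixed compact set, so along a subsequence $c_{n}'(0)\to w$ for some unit vector $w$. By continuity at infinity the endpoints pass to the limit, giving $\gamma_{w}(-\infty)=\gamma_{v}(-\infty)$ and $\gamma_{w}(+\infty)=\gamma_{v}(+\infty)$. Since $v$ has rank $1$, $\gamma_{v}$ admits no parallel perpendicular Jacobi field, so by the flat strip theorem in the absence of focal points it bounds no flat strip and is therefore the unique geodesic joining $\gamma_{v}(-\infty)$ to $\gamma_{v}(+\infty)$; with the perpendicular-foot normalisation this forces $w=v$, hence $c_{n}'(0)\to v$ and $c_{n}\to\gamma_{v}$ uniformly on compact parameter intervals. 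Consequently, for all large $n$ we have $d(\gamma_{v}(t),c_{n})\le d(\gamma_{v}(t),c_{n}(t))<\epsilon_{0}$ for every $t$ between $0$ and $a$; and since $v\in\mathrm{Reg}$ and $\mathrm{Reg}$ is open, $c_{n}'(0)\in\mathrm{Reg}$ for large $n$, so $c_{n}$ is a rank $1$ geodesic. This contradicts the defining property of the sequences $\xi_{n},\eta_{n}$, and the required neighborhoods $U_{\epsilon}$, $V_{\epsilon}$ are obtained by unwinding the contradiction.

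The main obstacle is the uniform non-escape bound on the connecting segments $\sigma_{k}$: in a genuine CAT$(0)$ space the near-$\pi$ angle at $p$ would place $\sigma_{k}$ close to $p$ immediately by triangle comparison, but here only the weaker convexity and visibility estimates available without focal points are at hand, so converting the angle condition into a bound on $d(p,\sigma_{k})$ uniform in $k$ is the delicate point. A secondary technical care is the two nested limits—$k\to\infty$ for each fixed $n$, then $n\to\infty$—which I would organise by a diagonal extraction so that Proposition \ref{pro1} can be applied along a single sequence of vectors converging to $v$.
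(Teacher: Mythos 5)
Your skeleton (contradiction, connecting geodesics as limits of chords, identification of the limit with $\gamma_v$ via the flat strip theorem, openness of $\mathrm{Reg}$ to get rank $1$) is the standard Ballmann-type outline, and the last two steps are fine as written. But the step you yourself flag as ``the main obstacle'' --- converting the near-$\pi$ angle at $p$ into a bound on $d(p,\sigma_k)$ uniform in $k$ --- is not a deferrable technicality: it is the entire content of the proposition, and the mechanism you propose for it provably cannot work. The implication ``$\measuredangle_p(\xi_n,\eta_n)$ close to $\pi$ implies the chords $\sigma_k$ stay at bounded distance from $p$'' is false already in $\mathbb{R}^2$, which is simply connected without focal points: if $\measuredangle_p(\xi_n,\eta_n)=\pi-1/n$, the chord joining $r_{\xi_n}(k)$ to $r_{\eta_n}(k)$ lies at distance $k\sin\bigl(1/(2n)\bigr)$ from $p$, which tends to infinity with $k$, and correspondingly no geodesic joins $\xi_n$ to $\eta_n$ at all. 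So no angle or comparison estimate can deliver the non-escape bound; it must come from $\mathrm{rank}(v)=1$. In your proposal, rank $1$ enters only \emph{after} the connecting geodesic $c_n$ has been produced (flat strip uniqueness of the limit, openness of $\mathrm{Reg}$), which is too late: without the bound, $c_n$ need not exist, and the contradiction scheme never gets started. In effect, the existence half of the proposition is being assumed in order to prove it.

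For comparison, note that the paper does not prove Proposition \ref{pro6}; it cites Watkins \cite{Wat} and \cite{LWW1}, whose proofs follow Ballmann \cite{Ba}: the point there is that the \emph{escape scenario itself} is turned into a contradiction with rank $1$ --- if the connecting segments leave every compact set, one manufactures a geodesic biasymptotic to, but distinct from, $\gamma_v$ (equivalently a flat strip along $\gamma_v$), which the flat strip theorem forbids. The closest model inside the paper is Case II of the proof of Proposition \ref{corss-ratio}: when $d(p,\gamma_{p_n,q_n})$ is unbounded, one interpolates between the escaping geodesics and the reference geodesic by a one-parameter family $\gamma_{n,s}$, chooses $s_n$ with $d(p,\gamma_{n,s_n}(0))=r$ for a fixed $r>0$, and extracts a limit vector $v_\infty$ with $d(p,\pi(v_\infty))=r$ whose geodesic is biasymptotic to the reference geodesic yet distinct from it; the flat strip theorem then gives the contradiction. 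If you replace your angle argument by an escape-implies-flat-strip argument of this type (run simultaneously in $n$ and $k$ with the diagonal extraction you already propose, and with endpoints controlled by Proposition \ref{pro1}), the remainder of your outline goes through essentially unchanged.
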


We state some useful corollary from \cite{LWW1} of the previous properties for further discussion here:
\begin{proposition}\label{corollary1}
Under the condition of Proposition \ref{pro6}, the following results hold:
\begin{enumerate}
\item For each $\xi\in X(\infty)$, there is a rank $1$ geodesic $\gamma_+$ with $\gamma_+(+\infty)=\xi$ and a rank $1$ geodesic $\gamma_{-}$ with $\gamma_{-}(-\infty)=\xi$.
\item Let $\gamma$ be a rank $1$ geodesic axis of some $\alpha \in \Gamma$.
For any neighborhood $U \subset \overline{X}$ of $\gamma(-\infty)$ and neighborhood $V \subset \overline{X}$ of $\gamma(+\infty)$,
there is a positive integer $N$ such that
$$\alpha^{n}(\overline{X}-U)\subset V, ~~\alpha^{-n}(\overline{X}-V)\subset U,~~\forall~n>N.$$
\item $\Gamma$ acts minimally on $X(\infty)$, i.e.~for any $\xi \in X(\infty)$, $\overline{\Gamma\xi}=X(\infty)$.
\end{enumerate}
\end{proposition}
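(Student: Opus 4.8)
The plan is to prove the three assertions in the order (2), (3), (1), using throughout that every $\alpha\in\Gamma$ is an isometry of $(X,\tilde g)$, hence carries rank $1$ geodesics to rank $1$ geodesics and preserves the rank of each vector. The heart of the matter, and the step I expect to be the main obstacle, is the north-south dynamics in (2). Writing $\alpha\gamma(t)=\gamma(t+\ell)$ with $\ell>0$, the points $\gamma(\pm\infty)$ are the two fixed points of $\alpha$ on $X(\infty)$. For a finite $x\in X$ one has $d(\alpha^n x,\gamma(n\ell))=d(x,\gamma(0))$ bounded while $\gamma(n\ell)\to\gamma(+\infty)$, so $\alpha^n x\to\gamma(+\infty)$ in the cone topology, uniformly on bounded subsets of $X$; it therefore suffices to treat boundary points and to prove convergence uniformly for $\xi\in X(\infty)\setminus U$. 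Fixing $p=\gamma(0)$ and letting $w_n\in S_pX$ be the direction of the ray from $p$ to $\alpha^n\xi$, I would rewrite $w_n=d\alpha^n(v_n)$, where $v_n\in S_{\gamma(-n\ell)}X$ is the direction of the ray from $\gamma(-n\ell)=\alpha^{-n}p$ to $\xi=\alpha^{-n}(\alpha^n\xi)$. The assertion $\alpha^n\xi\to\gamma(+\infty)$ then becomes the statement that, viewed from the far-backward points $\gamma(-n\ell)$, every $\xi$ bounded away from $\gamma(-\infty)$ points in a direction tending to that of the axis. This is exactly the contraction coming from $\gamma$ being rank $1$ (no nontrivial parallel perpendicular Jacobi field): Proposition \ref{pro1} lets one pass to the limit, and Proposition \ref{pro6} supplies the local product structure near $\gamma$ that forces the convergence to be uniform in $\xi\notin U$. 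Ruling out a sequence $\xi_k\notin U$ whose images $\alpha^{n_k}\xi_k$ remain outside $V$, by extracting a limiting biinfinite geodesic and contradicting the rank $1$ property, is the only genuinely delicate point; the symmetric claim for $\alpha^{-n}$ and $V$ is identical.

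Granting (2), minimality follows by a soft argument. Since $M$ is compact, $\Gamma$ acts cocompactly on $X$, so for every $\eta\in X(\infty)$ one may choose $g_k\in\Gamma$ with $g_kp\to\eta$; hence the limit set $L(\Gamma)$ is all of $X(\infty)$. Let $A$ be the closure of the set of attracting fixed points $g\alpha^+$ of the rank $1$ axial isometries $g\alpha g^{-1}$, $g\in\Gamma$; this set is nonempty, closed and $\Gamma$-invariant. If $B$ is any nonempty closed $\Gamma$-invariant set, then (since $\Gamma$ contains a rank $1$ axial isometry it fixes no single boundary point) $B$ contains some $\eta\neq\gamma(-\infty)$, and (2) gives $\alpha^n\eta\to\gamma(+\infty)\in B$; applying this to all conjugates shows $A\subseteq B$, so $A$ is the unique minimal set. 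Finally, since $g_kp\to\eta$ forces $g_k\gamma(t)\to\eta$ for each fixed $t$, the attracting fixed points $g_k\alpha^+$ accumulate on $\eta$, whence $A=X(\infty)$. As $\overline{\Gamma\xi}$ is a nonempty closed $\Gamma$-invariant set it contains $A=X(\infty)$, giving $\overline{\Gamma\xi}=X(\infty)$ for every $\xi$.

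For the last assertion I would argue directly from (3) and Proposition \ref{pro6}. Applying Proposition \ref{pro6} to $v$, the neighborhood $V_\epsilon$ of $\gamma_v(+\infty)$ lies entirely in the $\Gamma$-invariant set $\mathcal{R}_+:=\{\sigma(+\infty):\sigma\text{ a rank }1\text{ geodesic}\}$, because each $\eta\in V_\epsilon$ is joined to any point of $U_\epsilon$ by a rank $1$ geodesic. Given an arbitrary $\xi\in X(\infty)$, minimality makes $\Gamma\xi$ dense, so it meets the nonempty open set $V_\epsilon$: there is $g\in\Gamma$ with $g\xi\in V_\epsilon\subseteq\mathcal{R}_+$, and the $\Gamma$-invariance of $\mathcal{R}_+$ then yields $\xi=g^{-1}(g\xi)\in\mathcal{R}_+$, i.e.\ $\xi$ is the forward endpoint of a rank $1$ geodesic $\gamma_+$. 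Running the same argument for the backward endpoints (equivalently, applying the above to $-v$ and using $\gamma_v(-\infty)=\gamma_{-v}(+\infty)$) produces the rank $1$ geodesic $\gamma_-$ with $\gamma_-(-\infty)=\xi$, which completes the proof.
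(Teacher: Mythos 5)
First, note that the paper itself does not prove Proposition \ref{corollary1}: it is quoted from \cite{LWW1}, so your proposal can only be compared with the standard Ballmann-type development that that reference follows. Your architecture --- (2) first, then (3) via a unique-minimal-set argument, then (1) from minimality plus Proposition \ref{pro6} --- is indeed that development, and your deduction of (1) is correct and clean: $V_\epsilon\cap X(\infty)$ consists of forward endpoints of rank $1$ geodesics, this set of endpoints is $\Gamma$-invariant because isometries preserve rank, and density of the orbit $\Gamma\xi$ finishes the argument. The problems are in (2) and (3). In (2), what you actually establish is only the easy half (convergence of orbits of finite points, uniformly on bounded sets), and even the reduction to boundary points is incomplete, since $\overline{X}-U$ contains unbounded families of finite points which are covered neither by the bounded-set statement nor by the statement for $X(\infty)\setminus U$. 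More seriously, the step you yourself flag as ``the only genuinely delicate point'' is the entire content of the proposition, and ``extract a limiting biinfinite geodesic and contradict the rank $1$ property'' is not yet an argument: to invoke the flat strip theorem you must produce a geodesic biasymptotic to $\gamma$ and disjoint from it, which requires showing (i) that the connecting geodesics (say from $\gamma(-n_k\ell)$ to $\xi_k$) meet a fixed compact set, so that a limit exists at all, and (ii) that the limiting geodesic has endpoints exactly $\gamma(-\infty)$ and $\gamma(+\infty)$. Both require quantitative input from Proposition \ref{pro6}; this is where the real work lies (compare the Case I / Case II analysis in the proof of Proposition \ref{corss-ratio}), and it is missing.

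In (3) there is a step that is false as reasoned, not merely unproven: from $g_kp\to\eta$ you do get $g_k\gamma(t)\to\eta$ for each fixed $t$, but you cannot conclude that the attracting fixed points $g_k\gamma(+\infty)$ accumulate at $\eta$ --- that is an illegitimate interchange of limits. A geodesic passing through points converging to $\eta\in X(\infty)$ can have both endpoints far from $\eta$ (in a flat strip, geodesics crossing the strip transversally do exactly this), and you never invoke the rank $1$ hypothesis to exclude such behavior, so the claim $A=X(\infty)$ is unsupported. The standard repair is substantive: using the duality condition for the cocompact group $\Gamma$ (obtained from Poincar\'e recurrence on $SM$) together with a closing-lemma type argument based on Proposition \ref{pro6}, one shows that the pairs of fixed points of rank $1$ axial isometries are dense in $X(\infty)\times X(\infty)$; minimality then follows from the north--south dynamics of (2) exactly as you indicate. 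Your parenthetical claim that $\Gamma$ fixes no point of $X(\infty)$ (needed to rule out $B=\{\gamma(-\infty)\}$) is likewise asserted without proof. Finally, since your proof of (1) quotes (3), the gap in (3) propagates to (1) as well.
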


Next we will introduce the concepts of Busemann function and horospheres. For each pair of points $(p,q)\in X \times X$ and a point at infinity $\xi \in X(\infty)$, the function
$$b_{p}(q,\xi):=\lim_{t\rightarrow +\infty}\{d(q,\gamma_{p,\xi}(t))-t\},$$ is called the $\mathbf{Busemann ~function}$ determined by $p, q$ and $\xi$.
Here $\gamma_{p,\xi}$ is the geodesic ray connecting $p$ to $\xi$.
The level sets of the Busemann function $b_{p}(q,\xi)$ are called the $\mathbf{horospheres  ~centered~ at ~\xi}$. By using the language of horosphere, we are able to define the stable/unstable manifold for a rank $1$ recurrent vector. We denote the horosphere centering at $\xi$ and passing through $\gamma_{p,\xi}(t)$ by $\mathbf{\mbox{Hor}_{\xi}(\gamma_{p,\xi}(t))}$.

Let $\pi: SX\rightarrow X$ be the standard projection sending a tangent vector to its base point. Define: $$\mbox{Hor}^+(v):=\mbox{Hor}_{\gamma_v(+\infty)}(\pi(v)),~~\mbox{and}~~\mbox{Hor}^-(v):=\mbox{Hor}_{\gamma_v(-\infty)}(\pi(v)), ~~\forall~v\in SX,$$
and the stable and unstable manifolds for $v$:
\begin{align*}
W^s(v):=\{w\in SM~|~\pi(w)\in \mbox{Hor}^+(v),~w\perp \mbox{Hor}^+(v),~\gamma_{w}(+\infty)=\gamma_{v}(+\infty)\},\\
W^u(v):=\{w\in SM~|~\pi(w)\in \mbox{Hor}^-(v),~w\perp \mbox{Hor}^-(v),~\gamma_{w}(-\infty)=\gamma_{v}(-\infty)\}.
\end{align*}

Note that $W^{s/u}(v)$ may not be the standard stable/unstable manifolds of $v$. Recall the Knieper metric which is equivalent to the standard Sasaki metric on $SM$, and the distance function is $$d_1(v,w):=\max_{1\leq t\leq1}d(\gamma_v(t),\gamma_w(t)),~v,w\in SM$$
One can check that $d_1(\phi^t(v),\phi^t(w))$ may not converges to $0$ when $t\rightarrow \pm\infty$ for $w\in W^{s/u}(v)$ respectively. The following lemma, however, tells us that if $v$ is a rank $1$ recurrent vector, we have $d_1(\gamma_v(t),\gamma_w(t))\rightarrow 0$  when $t\rightarrow \pm\infty$ for $w\in W^{s/u}(v)$ respectively (cf. \cite{LWW1} and Knieper's original result on rank $1$ manifolds of non-positive curvature \cite{Kn1}).

\begin{lemma}[cf.~\cite{LWW1}]\label{asymptotic}
If $v\in SM$ is a rank $1$ recurrent vector, then for all $w\in W^s(v)$, $$d_1(\phi^t(v),\phi^t(w))\rightarrow 0,~t\rightarrow+\infty.$$
\end{lemma}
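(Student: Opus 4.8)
\emph{Proof strategy.} The plan is to work in the universal cover $X$, reduce the claim to the decay of the distance between two positively asymptotic geodesics, and then use recurrence together with the rank $1$ hypothesis to rule out a positive limiting distance. Fix a lift $\tilde{v}\in SX$ of $v$ and the corresponding lift $\tilde{w}\in SX$ of $w$ determined by the conditions defining $W^s(v)$; in particular $\gamma_{\tilde{w}}(+\infty)=\gamma_{\tilde{v}}(+\infty)$, so $\gamma_{\tilde{v}}$ and $\gamma_{\tilde{w}}$ are positively asymptotic and $f(t):=d(\gamma_{\tilde{v}}(t),\gamma_{\tilde{w}}(t))$ is bounded on $[0,+\infty)$ by \eqref{e1}. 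In a manifold without focal points the distance between two geodesics is a convex function of $t$; a convex function that is bounded above on a half-line is non-increasing, so $f$ decreases monotonically to a limit $c\geq 0$. I would record this monotonicity first, since it both produces $c$ and will later supply the passage from the pointwise distance to the $d_1$-distance.

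The heart of the matter is to show $c=0$, and this is where recurrence and rank $1$ enter. Suppose for contradiction that $c>0$. Since $v$ is recurrent, choose $t_n\to+\infty$ with $\phi^{t_n}v\to v$ in $SM$ and lift this to deck transformations $\alpha_n\in\Gamma$ with $d\alpha_n\,\phi^{t_n}\tilde{v}\to\tilde{v}$ in $SX$. As each $\alpha_n$ is an isometry, $d(\alpha_n\gamma_{\tilde{v}}(t_n),\alpha_n\gamma_{\tilde{w}}(t_n))=f(t_n)\to c$, so the base points of $\alpha_n\,\phi^{t_n}\tilde{w}$ remain in a bounded region; passing to a subsequence I may assume $\alpha_n\,\phi^{t_n}\tilde{w}\to\tilde{w}_{\infty}$ for some $\tilde{w}_{\infty}\in SX$. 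Then for each fixed $s\in\mathbb{R}$,
$$d\big(\gamma_{\tilde{v}}(s),\gamma_{\tilde{w}_{\infty}}(s)\big)=\lim_{n\to\infty} d\big(\alpha_n\gamma_{\tilde{v}}(t_n+s),\alpha_n\gamma_{\tilde{w}}(t_n+s)\big)=\lim_{n\to\infty} f(t_n+s)=c,$$
using continuity of the flow and $t_n+s\to+\infty$. Thus $\gamma_{\tilde{v}}$ and $\gamma_{\tilde{w}_{\infty}}$ are two distinct geodesics lying at the constant distance $c>0$ for all time, hence biasymptotic.

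Two biasymptotic geodesics at constant distance in a manifold without focal points bound a flat strip, and the transverse width field of this strip is a nontrivial parallel perpendicular Jacobi field along $\gamma_{\tilde{v}}$; this forces $\mathrm{rank}(\tilde{v})\geq 2$, contradicting $\mathrm{rank}(v)=1$. Hence $c=0$, i.e.\ $f(t)\to 0$. Finally I would upgrade this to the desired estimate: since $M$ is compact its injectivity radius is bounded below, so once $f(t)$ is small the distance in $M$ between $\gamma_v$ and $\gamma_w$ agrees with the distance in $X$ between the chosen lifts; combined with the monotonicity of $f$ this yields $d_1(\phi^t v,\phi^t w)=\max_{0\le s\le 1} f(t+s)=f(t)\to 0$ as $t\to+\infty$. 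The main obstacle is the middle step: extracting the limiting geodesic $\gamma_{\tilde{w}_{\infty}}$ and verifying that it sits at a genuinely constant distance from $\gamma_{\tilde{v}}$, since this is exactly the point at which the recurrence hypothesis gets converted, via the flat-strip/parallel-Jacobi-field dichotomy, into the rank $1$ contradiction; by comparison the convexity and flat-strip inputs, though geometric, are standard facts for manifolds without focal points.
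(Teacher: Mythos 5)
First, a point of reference: this paper never proves Lemma \ref{asymptotic} at all --- it is imported with a ``cf.~\cite{LWW1}'', being the no-focal-points analogue of Knieper's lemma in \cite{Kn1}. So your proposal can only be compared with that standard argument, and in outline you do follow it: recurrence, extraction of a limit vector, flat strip theorem, rank $1$ contradiction. But there is a genuine gap at the very first step, and it is not a cosmetic one. The claim ``in a manifold without focal points the distance between two geodesics is a convex function of $t$'' is false: convexity of $t\mapsto d(\gamma_1(t),\gamma_2(t))$ is a non-positive-curvature fact, and manifolds without focal points may contain regions of strictly positive curvature (Gulliver's examples \cite{Gu}), where the Jacobi equation makes the distance between nearby geodesics locally \emph{concave}. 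The unavailability of exactly this convexity is what makes the no-focal-points theory non-trivial in the first place. The gap propagates: you need the existence of $c=\lim_{t\to+\infty}f(t)$ in order to conclude $d(\gamma_{\tilde v}(s),\gamma_{\tilde w_\infty}(s))=\lim_{n}f(t_n+s)=c$ for \emph{every} fixed $s$, i.e.\ that the limiting geodesic sits at constant positive distance from $\gamma_{\tilde v}$; the recurrence times $t_n$ give you no control over $f(t_n+s)$ if only $\liminf$'s are available, and even after a flat-strip contradiction along subsequences you would only get $f(t_n)\to0$, not $f(t)\to0$. The correct substitute, which is what \cite{LWW1} (following \cite{Kn1}) uses, is the monotonicity coming from the stable objects themselves: in a manifold without focal points, stable Jacobi fields have non-increasing norm (equivalently, the stable Riccati solution is negative semi-definite and Busemann functions are $C^{1,1}$ and convex); since $w\in W^s(v)$ is synchronized with $v$ by the horospheres centered at $\xi=\gamma_{\tilde v}(+\infty)$, this yields that $f$ is non-increasing, and with that repair the rest of your scheme goes through.

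A second, smaller gap: ``two distinct geodesics at constant distance $c>0$, hence biasymptotic'' does not yet license the flat strip theorem, because the limit $\gamma_{\tilde w_\infty}$ could a priori be a time-shift of $\gamma_{\tilde v}$, i.e.\ $\gamma_{\tilde w_\infty}(s)=\gamma_{\tilde v}(s+a)$ with $|a|=c$: such a pair is at constant distance but has the same image and bounds no strip. You rule this out by noting that the conditions defining $W^s(\tilde v)$ pass to the limit (continuity of horospheres with respect to their centers, cf.~\cite{Ru1}), so $\tilde w_\infty\in W^s(\tilde v)$; in particular its foot point lies on $\mbox{Hor}^+(\tilde v)$, whereas $\gamma_{\tilde v}(a)$ for $a\neq0$ does not, since the Busemann function decreases strictly at unit rate along $\gamma_{\tilde v}$. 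With these two repairs --- monotonicity via stable Jacobi fields rather than distance convexity, and the horosphere argument for distinctness of images --- your proof is the intended one.
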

Therefore, for rank $1$ recurrent vector $v\in SM$, $W^{s/u}(v)$ are the stable/unstable manifolds for $v$ respectively. For more details of the Busemann functions and horosphere, please refer to ~\cite{Eb1, Ru1}.

\section{\bf{Measures of maximal entropy on manifolds without focal points}}\label{maximal}

In this section, we recall unique maximal entropy measure $\mu_{\max}$ discussed in \cite{LWW1}. This result is a variation of Knieper's work of \cite{Kn1}. The main idea originates from Knieper's work \cite{Kn1} and was later extended to rank $1$ manifolds without focal points in \cite{LWW1}.

We construct the measure of maximal entropy based on the Busemann density:

\begin{Defi}\label{def3}
Suppose $X$ is the universal covering of $M$, where $M$ is a simply connected Riemannian manifold without conjugate points.
$\Gamma \subset \text{Iso}(X)$ is the discrete subgroup such that $M=X/\Gamma$. Given a constant $r>0$, a family of finite Borel measures $\{\mu_{p}\}_{p\in X}$ on $X(\infty)$ is called an $r$-dimensional Busemann density if
\begin{enumerate}
\item $\forall~p,\ q \in X$ and $\mu_{p}$-a.e. $\xi\in X(\infty)$,
$$\frac{d\mu_{q}}{d \mu_{p}}(\xi)=e^{-r \cdot b_{p}(q,\xi)}.$$
\item For any Borel set $A \subset X(\infty)$ and any $\alpha \in \Gamma$,
$$\mu_{\alpha p}(\alpha A) = \mu_{p}(A).$$
\end{enumerate}
\end{Defi}

The existence and uniqueness of the $h$-dimensional Busemann density are given in \cite{LWW1}, where $h=h_{top}(g)$ is the topological entropy of the geodesic flow on $M$. The existence is proven based on the construction of the Patterson-Sullivan measure $\{\mu_p\}_{p\in X}$ by using the Poincar\'e series, and the uniqueness is proven by showing that Patterson-Sullivan measure is exactly the unique $h$-dimensional Busemann density. Moreover, in the same article it is shown that $\mbox{Supp}(\mu_p)=X(\infty)$ for all $p\in X$.

Define a projection map $P:SX \rightarrow X(\infty) \times X(\infty)$ by $P(v)=(\gamma_{v}(-\infty),\gamma_{v}(+\infty)).$ Let $\mathcal{I}^{P}=P(SX)=\{P(v)\mid v \in SX\}$ stand for the subset of pairs in $X(\infty) \times X(\infty)$ that can be connected by a geodesic.

Fix a point $p\in X$, we define a $\Gamma$-invariant measure $\overline{\mu}$ on $\mathcal{I}^{P}$ in the following way:
\begin{equation}\label{current}
d \overline{\mu}(\xi,\eta) = e^{h\cdot \beta_{p}(\xi,\eta)}d\mu_{p}(\xi) d\mu_{p}(\eta),
\end{equation}
Here $\mu_{p}$ is the Patterson-Sullivan measure on $X(\infty)$ discussed in the above,
$\beta_{p}(\xi,\eta)=-\{b_{p}(q,\xi)+b_{p}(q,\eta)\}$ is the Gromov product,
and $q$ is an arbitrary point on the geodesic $\gamma$ connecting $\xi$ and $\eta$.
One can check that the function $\beta_{p}(\xi,\eta)$ does not depend on the choice of $\gamma$ or $q$.
In geometric language, the Gromov product $\beta_{p}(\xi,\eta)$ is the length of the part of the geodesic
$\gamma_{\xi,\eta}$ between the horospheres $\mbox{Hor}_{\xi}(p)$ and $\mbox{Hor}_{\eta}(p)$.

This $\Gamma$-invariant measure $\overline{\mu}$ on $\mathcal{I}^{P}$ induces a $\phi$-invariant measure $\mu$ on $SX$ by
\begin{equation}\label{e:def of max entropy measure}
\mu(A)=\int_{\mathcal{I}^{P}} \text{Vol}\{\pi(P^{-1}(\xi,\eta)\cap A)\}d \overline{\mu}(\xi,\eta),
\end{equation}
for any Borel set $A\subset SX$. Here $\pi : SX \rightarrow X$ is the standard projection map and
Vol is the induced volume form on $\pi(P^{-1}(\xi,\eta))$.
From the definition of $P$, we know that $P^{-1}(\xi,\eta)=\emptyset$ if there is no geodesic connecting $\xi$ and $\eta$. When there are more than one geodesics connecting $\xi$ and $\eta$ with one geodesic having rank $k\geq 1$, we know that all these geodesics have rank $k$ by the flat strip theorem, which implies that $P^{-1}(\xi,\eta)$ is the $k$-flat sub-manifold connecting $\xi$ and $\eta$ consisting all rank $k$ geodesics between these two points. Particularly in the case $k=1$, $P^{-1}(\xi,\eta)$ is exactly the rank 1 (thus unique) geodesic connecting $\xi$ and $\eta$.

Following the above discussion we can conclude that for any Borel set $A\subset SX$ and $t\in \mathbb{R}$,
$\text{Vol}\{\pi(P^{-1}(\xi,\eta)\cap \phi^{t}A)\}=\text{Vol}\{\pi(P^{-1}(\xi,\eta)\cap A)\}$.
Therefore $\mu$ is both $\Gamma$-invariant and $\phi$-invariant.

The $\Gamma$-invariance implies that $\mu$ can be projected to a finite $\phi$-invariant measure on $SM$.
We may use $\mu$ to denote this projected measure later for notation simplicity. Note that $\mu$ is a finite measure, we can normalized it to a probability measure. Thus we can assume $\mu$ is a probability measure throughout the paper. Furthermore in \cite{LWW1}, we show that $\mu$ is the unique maximal invariant measure of the geodesic flow and denote it by $\mu_{\max}$.

The measure $\overline{\mu}$ is usually called the geodesic current associated to $\mu_{\max}$ (cf.~\cite{B}). Since $\mu_{\max}$ is supported on the regular set $\mbox{Reg}$ (cf.~\cite{LWW1}), the geodesic current $\overline{\mu}$ is a $\Gamma$-invariant ergodic measure supported
on $$\mathcal{R}:=\{(\xi,\eta) \in X(\infty) \times X(\infty)~|~ (\xi, \eta)=P(v),~\mbox{for~some~} v\in \mbox{Reg}\}.$$ So $\mathcal{R}$ is a $\overline{\mu}$-fully measure set in $X(\infty) \times X(\infty)$.
Moreover, from the expression of the geodesic current $\overline{\mu}$ (see (\ref{current})), we know that it is equivalent to $\mu_{p} \otimes \mu_{p}$, where $\mu_{p}$ is the Patterson-Sullivan measure for some $p\in X$,
which has full support on $X(\infty)$.

\section{\bf{The cross ratio}}\label{cratio}

For any $\xi, \eta \in X(\infty)$, let $\gamma_{\xi,\eta}$ be the connecting geodesic with
$\gamma_{\xi,\eta}(-\infty)=\xi$ and $\gamma_{\xi,\eta}(+\infty)=\eta$.
For $\xi, \eta, \xi', \eta' \in X(\infty)$, two rank $1$ connecting geodesics $\gamma_{\xi,\eta}$ and $\gamma_{\xi',\eta'}$ are called a $\mathbf{quadrilateral ~Quad(\xi, \eta, \xi', \eta')}$, if there exist two rank $1$ connecting geodesics $\gamma_{\xi,\eta'}$ and $\gamma_{\xi',\eta}$.
Proposition \ref{pro6} implies that the set $\mathcal{Q}$ of quadrilaterals is an open neighborhood of the diagonal $\mathcal{R} \times \mathcal{R}$, where $\mathcal{R}$ is the space of oriented non-parameterized rank $1$ geodesics.

\begin{proposition}\label{corss-ratio}
Let $X$ be a simply rank $1$ manifold without focal points.
For a quadrilateral Quad$(\xi, \eta, \xi', \eta')$, let $\{p_{n}\}^{\infty}_{n=1}$, $\{q_{n}\}^{\infty}_{n=1}$,
$\{p'_{n}\}^{\infty}_{n=1}$ and $\{q'_{n}\}^{\infty}_{n=1}$ be sequences of points in $X$ that converging to $\xi, \eta, \xi'$ and $\eta'$ respectively.
Define
$$Cr(p_{n}, q_{n}, p'_{n}, q'_{n})=d(p_{n},q_{n})+d(p'_{n},q'_{n})-d(p_{n},q'_{n})-d(p'_{n},q_{n}).$$
One can check $\lim_{n\rightarrow \infty}Cr(p_{n}, q_{n}, p'_{n}, q'_{n})$ exists and is independent of the choices of sequences,
which allows us to define $Cr(\xi, \eta, \xi', \eta')$ as this limit.
\end{proposition}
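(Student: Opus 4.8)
The plan is to collapse the four–point cross ratio into a combination of two–point Gromov products and then to prove that these extend continuously to the boundary. Fix a reference point $o\in X$ and write the finite Gromov product $(a|b)_o=\frac12\big(d(o,a)+d(o,b)-d(a,b)\big)$, so that $d(a,b)=d(o,a)+d(o,b)-2(a|b)_o$ for all $a,b\in X$. Substituting this into $Cr(p_{n},q_{n},p'_{n},q'_{n})$, I would observe that each of $p_n,q_n,p'_n,q'_n$ enters the four distances once with a plus and once with a minus sign, so every term $d(o,\cdot)$ cancels and one is left with the exact identity
\[
Cr(p_{n},q_{n},p'_{n},q'_{n})=-2\big[(p_{n}|q_{n})_{o}+(p'_{n}|q'_{n})_{o}-(p_{n}|q'_{n})_{o}-(p'_{n}|q_{n})_{o}\big].
\]
The four pairs occurring here, namely $(\xi,\eta)$, $(\xi',\eta')$, $(\xi,\eta')$ and $(\xi',\eta)$, are exactly the pairs joined by rank $1$ geodesics, by the definition of the quadrilateral $\mathrm{Quad}(\xi,\eta,\xi',\eta')$.

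Thus everything reduces to the following claim: if $\zeta,\omega\in X(\infty)$ are joined by a rank $1$ geodesic $\gamma_{\zeta,\omega}$ and $a_n\to\zeta$, $b_n\to\omega$ in the cone topology, then $(a_n|b_n)_o\to\frac12\beta_o(\zeta,\omega)$, equivalently $d(a_n,b_n)-d(o,a_n)-d(o,b_n)\to b_o(q,\zeta)+b_o(q,\omega)$ for any $q\in\gamma_{\zeta,\omega}$, a quantity independent of $q$ by the discussion of the Gromov product in Section \ref{maximal}. Granting this claim, each of the four terms in the identity converges to a number depending only on the two endpoints involved, so $\lim_n Cr$ exists and is manifestly independent of the approximating sequences. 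The same identity also shows independence of $o$: in the combination above each of $\xi,\eta,\xi',\eta'$ occurs with total coefficient zero, and changing the base point alters $\beta_o(\zeta,\omega)$ only by terms attached to the individual endpoints, which therefore cancel.

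It remains to prove the claim, which is the heart of the matter. For the upper bound I would fix $q_0\in\gamma_{\zeta,\omega}$ and use the triangle inequality to get $d(a_n,b_n)-d(o,a_n)-d(o,b_n)\le\big(d(a_n,q_0)-d(o,a_n)\big)+\big(d(q_0,b_n)-d(o,b_n)\big)$; by the continuity at infinity (Proposition \ref{pro1}) each bracket tends to $b_o(q_0,\zeta)$, resp. $b_o(q_0,\omega)$, whose sum is $-\beta_o(\zeta,\omega)$. For the matching lower bound I would show that the connecting segment $\gamma_{a_n,b_n}$ passes within any prescribed $\epsilon>0$ of $q_0$ once $n$ is large: extending it to a bi-infinite geodesic $\bar\gamma_n$ with endpoints $\zeta_n,\omega_n\in X(\infty)$, a compactness argument together with Proposition \ref{pro1} forces $\zeta_n\to\zeta$ and $\omega_n\to\omega$, and then the uniqueness of the rank $1$ connecting geodesic (flat strip theorem) identifies $\bar\gamma_n$ with the connecting geodesic of $(\zeta_n,\omega_n)$, to which Proposition \ref{pro6} applies to keep it uniformly $\epsilon$-close to $\gamma_{\zeta,\omega}$ near $q_0$. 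Picking $\bar q_n\in\gamma_{a_n,b_n}$ with $d(\bar q_n,q_0)<\epsilon$ and splitting the segment at $\bar q_n$ gives $d(a_n,b_n)-d(o,a_n)-d(o,b_n)\ge\big(d(a_n,\bar q_n)-d(o,a_n)\big)+\big(d(\bar q_n,b_n)-d(o,b_n)\big)-2\epsilon$, whose right-hand side tends to $-\beta_o(\zeta,\omega)-2\epsilon$ by Proposition \ref{pro1}; letting $\epsilon\to0$ closes the gap. The main obstacle is precisely this lower bound, namely the control of where the segments $\gamma_{a_n,b_n}$ travel, and it is here that both standing hypotheses are indispensable: the rank $1$ condition (no parallel perpendicular Jacobi field, hence no flat strip) makes the connecting geodesic unique, so that Proposition \ref{pro6} pins the segments onto $\gamma_{\zeta,\omega}$, while the absence of focal points underlies both the continuity at infinity and the convexity estimates used throughout.
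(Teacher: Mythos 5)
Your opening reduction is correct and is genuinely different from the paper's route: you collapse $Cr$ into an alternating sum of Gromov products $(\,\cdot\,|\,\cdot\,)_o$ and reduce everything to the single claim that $(a_n|b_n)_o\to\frac12\beta_o(\zeta,\omega)$ whenever $a_n\to\zeta$, $b_n\to\omega$ and $(\zeta,\omega)$ is joined by a rank $1$ geodesic, whereas the paper fixes four pairwise disjoint horospheres, proves that the four connecting geodesics $\gamma_{p_n,q_n},\gamma_{p'_n,q'_n},\gamma_{p_n,q'_n},\gamma_{p'_n,q_n}$ converge in the Hausdorff sense to $\gamma_{\xi,\eta},\gamma_{\xi',\eta'},\gamma_{\xi,\eta'},\gamma_{\xi',\eta}$, and identifies the limit as the alternating sum of the horospherical segment lengths $d_1+d_2-d_3-d_4$. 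However, your proof has a genuine gap exactly at the step you call the heart of the matter. In the lower bound you assert that the ideal endpoints $\zeta_n,\omega_n$ of the bi-infinite extension $\bar\gamma_n$ of the segment $\gamma_{a_n,b_n}$ satisfy $\zeta_n\to\zeta$, $\omega_n\to\omega$ ``by a compactness argument together with Proposition \ref{pro1}.'' This does not follow. Compactness of $\overline{X}$ only produces subsequential limits $\zeta_\infty,\omega_\infty$; identifying them with $\zeta,\omega$ is the entire difficulty. Proposition \ref{pro1} gives continuity of $(v,t)\mapsto\gamma_v(t)$, i.e.\ it lets you pass from convergence of initial vectors to convergence of endpoints, while here you would need the reverse implication for rays both of whose defining points $a_n,b_n$ escape to infinity, so no continuity-of-rays argument applies. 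A priori the segment $\gamma_{a_n,b_n}$ could bulge arbitrarily far from $\gamma_{\zeta,\omega}$, in which case its extension could have ideal endpoints far from $\zeta$ and $\omega$; ruling out such bulging is logically equivalent to the control of the segments you are trying to prove, so as written the argument is circular. This missing control is precisely what the paper's two-case analysis supplies: if the segments do not accumulate on $\gamma_{\xi,\eta}$, limits of suitably chosen vectors produce (via Proposition \ref{pro1} and, in the unbounded case, an interpolation argument) a geodesic biasymptotic to but distinct from $\gamma_{\xi,\eta}$, and the flat strip theorem for manifolds without focal points \cite{Os} then contradicts rank $1$. Some argument of this kind must be inserted before you may speak of $\zeta_n\to\zeta$; once the segments are known to pass near $q_0$, your splitting at $\bar q_n$ does close the lower bound.

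A second, lesser, gap: both of your bounds use that $d(a_n,q_0)-d(o,a_n)\to b_o(q_0,\zeta)$ for an \emph{arbitrary} sequence $a_n\to\zeta$ in the cone topology, and you again attribute this to Proposition \ref{pro1}. That proposition does not say this. What you need is that Busemann functions are limits of normalized distance functions (horospheres are limits of spheres) along arbitrary cone-convergent sequences; this is a nontrivial property of manifolds without focal points and should be quoted from \cite{LWW1} or Ruggiero \cite{Ru1} --- the paper itself invokes the closely related continuity of horospheres with respect to their centers at the last step of its proof. If both points are repaired, your Gromov-product framing is a valid and attractive alternative: it produces an explicit formula for the limit as an alternating sum of the Gromov products $\beta_o$ of the four pairs of ideal points, which the paper obtains only implicitly through the segment lengths $d_1+d_2-d_3-d_4$.
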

\begin{proof}
Denote the four horospheres centered at $\xi, \eta, \xi'$ and $\eta'$ by $\mbox{Hor}_{\xi}$, $\mbox{Hor}_{\eta}$, $\mbox{Hor}_{\xi'}$ and $\mbox{Hor}_{\eta'}$, and the corresponding horoballs by $\overline{\mbox{Hor}}_{\xi}$, $\overline{\mbox{Hor}}_{\eta}$, $\overline{\mbox{Hor}}_{\xi'}$ and $\overline{\mbox{Hor}}_{\eta'}$ respectively.
Let $X_{0}=X - \{\overline{\mbox{Hor}}_{\xi}\cup \overline{\mbox{Hor}}_{\eta}\cup \overline{\mbox{Hor}}_{\xi'} \cup \overline{\mbox{Hor}}_{\eta'}\}$. Choose $\mbox{Hor}_{\xi}$, $\mbox{Hor}_{\eta}$, $\mbox{Hor}_{\xi'}$ and $\mbox{Hor}_{\eta'}$ to be pairwise disjoint, thus
$$\gamma_{\xi,\eta}\cap X_{0}, ~~\gamma_{\xi',\eta'}\cap X_{0},  ~~\gamma_{\xi,\eta'}\cap X_{0},  ~~\gamma_{\xi',\eta}\cap X_{0}$$
consists of four geodesic segments with the lengths $d_{1}$, $d_{2}$, $d_{3}$ and $d_{4}$, respectively. We want to show that the limit above is equal to $d_{1} + d_{2}- d_{3} - d_{4}$.

First, for any two geodesics $\gamma_{1}$ and $\gamma_{2}$ with $\gamma_{1}(+\infty)=\gamma_{2}(+\infty):=\zeta$,
and any two horospheres $\mbox{Hor}^{1}_{\zeta}$ and $\mbox{Hor}^{2}_{\zeta}$ centered at $\zeta$, using the fact that the $\mbox{Hor}^{1}_{\zeta}$ and $\mbox{Hor}^{2}_{\zeta}$ are the level sets of Busemann function $b_{\cdot}(\cdot,\zeta)$, we can see that the lengths of the segments of $\gamma_{1}$ and $\gamma_{2}$ between two horospheres are equal, implying that the number $d_{1} + d_{2}- d_{3} - d_{4}$ is independent of the choice of the horospheres.

Next we will show that the four bi-infinite connecting geodesics $\gamma_{p_{n},q_{n}}$, $\gamma_{p'_{n},q'_{n}}$, $\gamma_{p_{n},q'_{n}}$ and $\gamma_{p'_{n},q_{n}}$
converge to the geodesics $\gamma_{\xi,\eta}$, $\gamma_{\xi',\eta'}$, $\gamma_{\xi,\eta'}$ and $\gamma_{\xi',\eta}$, respectively, with the topology induced by the Hausdorff metric of compact sets. For this purpose, we fix a point $p$ in $\gamma_{\xi,\eta}$ and parametrize this geodesic by setting $p=\gamma_{\xi,\eta}(0)$ . Since $p_{n} \rightarrow \xi$,
passing to a subsequence if necessary, we can assume that for each $n\in \mathbb{N}$, $p_{n} \in TC(-\gamma'_{\xi,\eta}(0),\frac{1}{n},n)$,
where $TC(-\gamma'_{\xi,\eta}(0),\frac{1}{n},n)$ is the truncated cone. That is to say that the angle between the two geodesics
$\gamma_{p,\xi}$ and $\gamma_{p,p_{n}}$ at $p$ is smaller than $\frac{1}{n}$ while the distance between $p$ and $p_{n}$ is greater than $n$.
If $\gamma_{p_{n},q_{n}}$ does not converge to $\gamma_{\xi,\eta}$, passing to a subsequence if necessary,
we can find at least one point $x$ in the geodesic $\gamma_{p_{n},q_{n}}$ and a number $r > 0$ such that
$$d(x,\gamma_{p_{n},q_{n}}) \geq r,  \quad n \in \mathbb{N}.$$
Without loss of generality, we can assume that $x=p$. We consider the following two cases:

\vspace{1ex}
$\bullet$ \textbf{Case I}~~$\{d(p,\gamma_{p_{n},q_{n}})\}^{\infty}_{n=1}$ is bounded from above by some constant $R$:
$$r \leq d(p,\gamma_{p_{n},q_{n}})\leq R, ~~~n \in \mathbb{N}.$$

We choose the parametrization of $\gamma_{p_{n},q_{n}}$ such that
$d(p,\gamma_{p_{n},q_{n}}(0))=d(p,\gamma_{p_{n},q_{n}})$. Passing to a subsequence if necessary,
we can assume $v=\lim_{n\rightarrow +\infty}\gamma'_{p_{n},q_{n}}(0)$.
Then $v\neq \gamma'_{p,\xi}(t)$ for all $t \in \mathbb{R}$. By Proposition \ref{pro1}, we have
$$\gamma_{v}(+\infty)=\gamma_{p,\xi}(+\infty)=\xi, ~~\gamma_{v}(-\infty)=\gamma_{p,\xi}(-\infty)=\eta.$$
Thus by the famous flat strip theorem (the no focal points version, cf. ~\cite{Os}), $\gamma_v$ and $\gamma_{p,\xi}$ bound a flat strip,
implying the rank of the geodesic $\gamma_{p,\xi}$ is greater than $1$, which contradicts to the assumption that $\gamma_{\xi,\eta}$ is a rank one geodesic.

\vspace{1ex}
$\bullet$ \textbf{Case II}~~$\{d(p,\gamma_{p_{n},q_{n}})\}^{\infty}_{n=1}$ is unbounded.

By the parametrization $p=\gamma_{\xi,\eta}(0)$, the two geodesics $\gamma_{\xi,\eta}$ and $\gamma_{p,\xi}$ actually coincide. We can choose $x_{n}, y_{n}\in \gamma_{\xi,\eta}$ for each $n\in \mathbb{N}$ such that
$$d(p_{n},x_{n})=d(p_{n},\gamma_{\xi,\eta}), ~~d(q_{n},y_{n})=d(q_{n},\gamma_{\xi,\eta}).$$
Let $b_{n}:[0,1]\rightarrow X$ be a smooth curve connecting $x_{n}$ and $p_{n}$ with $b_{n}(0)=x_{n}$ and $b_{n}(1)=p_{n}$,
and $\measuredangle_{p}(\xi,b_{n}(s))$ increases as $s$ growing up.
Similarly, let $c_{n}:[0,1]\rightarrow X$ be a smooth curve connecting $y_{n}$ and $q_{n}$ with $c_{n}(0)=y_{n}$ and $c_{n}(1)=q_{k}$, and $\measuredangle_{p}(\eta,c_{n}(s))$ increases as $s$ growing up. Let $\gamma_{n,s}$ be the unique geodesic connecting $c_{n}(s)$ and $b_{n}(s)$ for each $s \in [0,1]$, with the parametrization such that $\gamma_{n,0}(0)=p$ and $\gamma_{n,s}(0)$ is a smooth curve with respect to $s$.

We can find some $s_n\in (0,1]$ such that $d(p,\gamma_{n,s_{n}}(0))=r >0$ because $d(p,\gamma_{n}(0))=d(p,\gamma_{n,1}(0))\geq r$ and $d(p,p)=d(p,\gamma_{n,0}(0))=0$. Without loss of generality we can assume that $\lim_{n\rightarrow +\infty}\gamma'_{n,s_{n}}(0)= v_{\infty}\in SX$. We have $d(p,\pi(v_{\infty}))=r>0$.
Then it's easy to show that
$$d(b_{n}(s_{n}),p)\rightarrow +\infty, ~~d(c_{n}(s_{n}),p)\rightarrow +\infty.$$
Then by the fact
$$\max\{\measuredangle_{p}(\xi,b_{n}(s_{n})),\ \measuredangle_{p}(\eta,c_{n}(s_{n}))\}\leq \frac{1}{n}.$$
we get
$$\lim_{n\rightarrow +\infty}b_{n}(s_{n})=\xi,~~\lim_{n\rightarrow +\infty}c_{n}(s_{n})=\eta.$$
Proposition \ref{pro1} leads to
$$\gamma_{v_{\infty}}(+\infty)=\lim_{n\rightarrow +\infty}\gamma_{n,s_{n}}(+\infty)=\lim_{n\rightarrow +\infty}b_{n}(s_{n})=\xi,$$
$$\gamma_{v_{\infty}}(-\infty)=\lim_{n\rightarrow +\infty}\gamma_{n,s_{n}}(-\infty)=\lim_{n\rightarrow +\infty}c_{n}(s_{n})=\eta.$$
Given $X$ is a manifold without focal points, $b_{n}(s_{n})$ and $c_{n}(s_{n})$ cannot be on geodesic $\gamma_{\xi,\eta}$ simultaneously.
Thus for all $t\in \mathbb{R}$,  $v_{\infty}\neq \gamma_{\xi,\eta}'(t)$. We can conclude that $\gamma_v$ and $\gamma_{v_{\infty}}$ bound a flat strip, contradicting the assumption that $\gamma_{\xi,\eta}$ is a rank $1$ geodesic.

We have proved that the four sequences of geodesics $\gamma_{p_{n},q_{n}}$, $\gamma_{p'_{n},q'_{n}}$, $\gamma_{p_{n},q'_{n}}$ and $\gamma_{p'_{n},q_{n}}$ converge to the geodesics $\gamma_{\xi,\eta}$, $\gamma_{\xi',\eta'}$, $\gamma_{\xi,\eta'}$ and $\gamma_{\xi',\eta}$, respectively. Thus the infinite end points of these geodesics $\gamma_{p_{n},q_{n}}$, $\gamma_{p'_{n},q'_{n}}$, $\gamma_{p_{n},q'_{n}}$ and $\gamma_{p'_{n},q_{n}}$ also converge to the corresponding end points of geodesics $\gamma_{\xi,\eta}$, $\gamma_{\xi',\eta'}$, $\gamma_{\xi,\eta'}$ and $\gamma_{\xi',\eta}$.
From Proposition \ref{pro6}, we know that
the four segments of $\gamma_{p_{n},q_{n}}$, $\gamma_{p'_{n},q'_{n}}$, $\gamma_{p_{n},q'_{n}}$ and $\gamma_{p'_{n},q_{n}}$ intersected with $X_{0}$
converge to the four segments of $\gamma_{\xi,\eta}$, $\gamma_{\xi',\eta'}$, $\gamma_{\xi,\eta'}$ and $\gamma_{\xi',\eta}$ intersected with $X_{0}$, respectively. Write $d^{n}_{1}$, $d^{n}_{2}$, $d^{n}_{3}$ and $d^{n}_{4}$ to be the lengths of the four segments of $\gamma_{p_{n},q_{n}}$, $\gamma_{p'_{n},q'_{n}}$, $\gamma_{p_{n},q'_{n}}$ and $\gamma_{p'_{n},q_{n}}$ intersected with $X_{0}$, the only thing left we need to show is that $\lim_{n\to\infty} d^n_i=d_i,~i=1,2,3,4$.

Let $A_{n}$ (resp. $A$) be the intersection point of geodesic $\gamma_{p_n,q_n}$ (resp. $\gamma_{\xi,\eta}$) with the horosphere $\mbox{Hor}_{\xi}$. On the manifolds without focal points, the horospheres $\mbox{Hor}_\xi$ depends continuously on the center $\xi$ (cf.~Ruggiero \cite{Ru1}), implying $A_{n}\to A$. Thus, $d^n_1\to d_1$, so with the other three convergences.
\end{proof}

We call this limit the $\mathbf{cross ~ratio ~of ~the ~quadrilateral}$ and denoted the quantity by
$\mathbf{Cr(\xi,\eta,\xi',\eta')}$. The continuity of the cross ratio follows the continuity of the horospheres with respect to
their centers:

\begin{corollary}\label{continuous}
Let $X$ be a simply rank $1$ manifold without focal points, then the cross ratio is a continuous function on $\mathcal{Q}$.
\end{corollary}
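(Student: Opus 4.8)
The plan is to verify sequential continuity: fix a quadrilateral $\mathrm{Quad}(\xi,\eta,\xi',\eta')$ and a sequence $\mathrm{Quad}(\xi_n,\eta_n,\xi'_n,\eta'_n)\in\mathcal{Q}$ whose four boundary points converge to $\xi,\eta,\xi',\eta'$ in the cone topology, and show $Cr(\xi_n,\eta_n,\xi'_n,\eta'_n)\to Cr(\xi,\eta,\xi',\eta')$. Since $\mathcal{Q}$ is open (it is an open neighborhood of the diagonal $\mathcal{R}\times\mathcal{R}$), the approximating tuples are genuine quadrilaterals for $n$ large. I would start from the representation $Cr=d_1+d_2-d_3-d_4$ proved in Proposition \ref{corss-ratio}, where $d_1,\dots,d_4$ are the lengths of the segments that the four connecting geodesics cut out between four pairwise disjoint horospheres $\mbox{Hor}_\xi,\mbox{Hor}_\eta,\mbox{Hor}_{\xi'},\mbox{Hor}_{\eta'}$. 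Because that proposition also shows the value is independent of the choice of the bounding horospheres, the whole problem reduces to producing, for the perturbed endpoints, bounding horospheres and connecting geodesics that vary continuously, so that the four segment lengths $d_i^n$ converge to $d_i$.

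For the horospheres, I would invoke the continuity of $\mbox{Hor}_\zeta$ with respect to its center $\zeta$ on manifolds without focal points (Ruggiero, \cite{Ru1}): choosing the limiting horospheres $\mbox{Hor}_\xi,\dots$ pairwise disjoint, the horospheres $\mbox{Hor}_{\xi_n},\dots$ centered at the moving endpoints converge to them and remain pairwise disjoint for large $n$. For the connecting geodesics, I would show that $\gamma_{\xi_n,\eta_n}$ converges to $\gamma_{\xi,\eta}$ uniformly on compact sets, and likewise for the other three pairs. This is exactly the type of convergence established inside the proof of Proposition \ref{corss-ratio}, the only new feature being that the endpoints at infinity now move as well; the argument is the same, combining the continuity at infinity of Proposition \ref{pro1} with the rank $1$ stability of Proposition \ref{pro6}, and ruling out an escaping limit geodesic by the flat strip theorem together with the rank $1$ hypothesis. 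Once both the geodesics and the bounding horospheres converge, the four pairs of entry and exit points converge, and hence $d_i^n\to d_i$ and $Cr(\xi_n,\eta_n,\xi'_n,\eta'_n)\to Cr(\xi,\eta,\xi',\eta')$.

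The main obstacle is the joint convergence of the connecting geodesics under a perturbation of their endpoints at infinity, together with the continuity of their transversal intersections with the moving horospheres; here the rank $1$ assumption is essential, since it guarantees uniqueness of each connecting geodesic and, via the flat strip theorem, excludes the degenerate limits that arise in Cases I and II of Proposition \ref{corss-ratio}. A cleaner alternative I would keep in reserve is to rewrite the cross ratio through the Gromov product of \eqref{current}, namely $Cr(\xi,\eta,\xi',\eta')=\beta_p(\xi,\eta)+\beta_p(\xi',\eta')-\beta_p(\xi,\eta')-\beta_p(\xi',\eta)$, which holds because the value $d_1+d_2-d_3-d_4$ is independent of the bounding horospheres and each $\beta_p$ is precisely the signed length cut between horospheres through the fixed base point $p$. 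Continuity of $Cr$ then reduces to joint continuity of $(q,\zeta)\mapsto b_p(q,\zeta)$ along a continuously chosen point $q\in\gamma_{\xi,\eta}$, which again follows from Proposition \ref{pro1}.
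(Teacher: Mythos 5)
Your main argument is correct and is essentially the paper's own proof: the paper justifies this corollary precisely by the representation $Cr=d_1+d_2-d_3-d_4$ from Proposition \ref{corss-ratio} together with Ruggiero's continuity of horospheres in their centers, with the convergence of the connecting geodesics under perturbation of the endpoints at infinity supplied by Proposition \ref{pro6} and the flat-strip/rank-$1$ uniqueness argument, exactly as you outline. One correction to the formula you keep in reserve: with the paper's conventions the identity reads $Cr(\xi,\eta,\xi',\eta')=\beta_p(\xi,\eta')+\beta_p(\xi',\eta)-\beta_p(\xi,\eta)-\beta_p(\xi',\eta')$, i.e.\ the negative of what you wrote, since each distance satisfies $d(x,y)=d(p,x)+d(p,y)-2(x|y)_p$ with $2(x|y)_p\to\beta_p(\cdot,\cdot)$; indeed, for intersecting diagonals one may take $p$ at the intersection point, so that $\beta_p(\xi,\eta)=\beta_p(\xi',\eta')=0$ and the two remaining Gromov products are nonnegative, whence your sign would force $Cr\leq 0$, contradicting Proposition \ref{positive}.
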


By the triangle inequality, the following proposition is straightforward:

\begin{proposition}\label{positive}
	The cross ratio is strictly positive if the quadrilateral underneath defined by two intersecting rank $1$ geodesics.
\end{proposition}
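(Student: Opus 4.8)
The plan is to evaluate the geometric expression for the cross ratio at the intersection point of the two ``diagonals'' and to read off strict positivity from the \emph{strict} triangle inequality. Recall from the proof of Proposition \ref{corss-ratio} that, for any choice of pairwise disjoint horospheres $\mbox{Hor}_\xi,\mbox{Hor}_\eta,\mbox{Hor}_{\xi'},\mbox{Hor}_{\eta'}$, one has $Cr(\xi,\eta,\xi',\eta')=d_1+d_2-d_3-d_4$, where $d_1,d_2$ are the lengths of the segments cut out of the two given intersecting geodesics $\gamma_{\xi,\eta},\gamma_{\xi',\eta'}$ and $d_3,d_4$ the corresponding lengths for the side geodesics $\gamma_{\xi,\eta'},\gamma_{\xi',\eta}$. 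By the definition of a quadrilateral all four connecting geodesics are rank $1$; and since $\gamma_{\xi,\eta}$ and $\gamma_{\xi',\eta'}$ are two \emph{distinct} geodesics meeting at a point, their four endpoints are pairwise distinct (two distinct geodesics sharing a boundary point would, by uniqueness of rays, coincide). I would let $O$ denote their intersection point.

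Next I would rewrite $Cr$ through Busemann functions. Using the equal-length property of segments between horospheres with a common center (already invoked in Proposition \ref{corss-ratio}), the four cut-off lengths take the form $d_i=c_i-(\text{horosphere offsets})$, where $c_1,\dots,c_4$ are the constant values of the sums $b_\xi+b_\eta$, $b_{\xi'}+b_{\eta'}$, $b_\xi+b_{\eta'}$, $b_{\xi'}+b_\eta$ along the respective connecting geodesics; the offsets cancel in the alternating sum, giving the horosphere-independent identity $Cr=c_1+c_2-c_3-c_4$. On a manifold without focal points the Busemann functions are $C^1$ with unit gradient and convex (cf.~\cite{Eb1,Ru1}), so each of these sums is convex and attains its global minimum exactly on its connecting geodesic; hence $c_i$ is that minimum. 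Since $O$ lies on both $\gamma_{\xi,\eta}$ and $\gamma_{\xi',\eta'}$, I get the exact identities $c_1=b_\xi(O)+b_\eta(O)$ and $c_2=b_{\xi'}(O)+b_{\eta'}(O)$, while $c_3\le b_\xi(O)+b_{\eta'}(O)$ and $c_4\le b_{\xi'}(O)+b_\eta(O)$. Substituting and cancelling the four Busemann values at $O$ gives $Cr\ge 0$.

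The crux is upgrading this to a strict inequality, and I would do so by showing $c_3<b_\xi(O)+b_{\eta'}(O)$. By convexity and $C^1$-regularity the minimum of $b_\xi+b_{\eta'}$ is attained precisely where $\nabla b_\xi=-\nabla b_{\eta'}$, i.e.~where the rays toward $\xi$ and toward $\eta'$ are opposite, which are exactly the points lying on a geodesic from $\xi$ to $\eta'$. As $\gamma_{\xi,\eta'}$ is rank $1$, the flat strip theorem (cf.~\cite{Os}) rules out a second such geodesic, so the minimum set is exactly $\gamma_{\xi,\eta'}$ and the value is strictly larger off it. Finally $O\notin\gamma_{\xi,\eta'}$: otherwise $O$ would lie on both $\gamma_{\xi,\eta}$ and $\gamma_{\xi,\eta'}$, and uniqueness of the geodesic ray from $O$ to the boundary point $\xi$ would force these two geodesics to share the ray toward $\xi$, hence also the opposite ray, giving $\eta=\eta'$ and contradicting distinctness of the endpoints. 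Therefore $c_3<b_\xi(O)+b_{\eta'}(O)$, and $Cr(\xi,\eta,\xi',\eta')>0$.

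The main obstacle is exactly this strictness step: the bare triangle inequality yields only $Cr\ge 0$, and the strict gap must be extracted from the off-geodesic position of $O$. A more hands-on alternative works directly with the defining limit: choosing $p_n,q_n$ on $\gamma_{\xi,\eta}$ and $p'_n,q'_n$ on $\gamma_{\xi',\eta'}$ receding to the four endpoints, the segments $[p_n,q_n]$ and $[p'_n,q'_n]$ pass through $O$, while $d(p_n,q'_n)<d(p_n,O)+d(O,q'_n)$ and $d(p'_n,q_n)<d(p'_n,O)+d(O,q_n)$ strictly, because the rays issuing from $O$ toward $\xi$ and $\eta'$ (resp.~$\xi'$ and $\eta$) meet at an angle strictly less than $\pi$. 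The delicate point on that route is to verify that the excess in the triangle inequality does not decay to $0$ as the points recede, which again rests on the no-focal-points hypothesis (convexity of $t\mapsto d(\gamma_1(t),\gamma_2(t))$); for this reason I expect the Busemann-function formulation above to be the cleaner route to write up.
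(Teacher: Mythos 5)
Your proof is correct, but it takes a genuinely different route from the paper, whose entire justification is the one-line remark that the proposition is ``straightforward by the triangle inequality.'' The elementary argument the paper intends becomes rigorous precisely because the approximating points may be chosen on the two intersecting geodesics themselves: with $p_n,q_n\in\gamma_{\xi,\eta}$ and $p'_n,q'_n\in\gamma_{\xi',\eta'}$ receding from the intersection point $O$, one has $d(p_n,q_n)=d(p_n,O)+d(O,q_n)$ and likewise for the primed pair, so $Cr$ is the limit of the sum of the two triangle defects $F_n=d(p_n,O)+d(O,q'_n)-d(p_n,q'_n)$ and $G_n=d(p'_n,O)+d(O,q_n)-d(p'_n,q_n)$. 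These defects are non-decreasing in $n$ by the triangle inequality alone, since $d(p_{n+1},q'_{n+1})\le d(p_{n+1},p_n)+d(p_n,q'_n)+d(q'_n,q'_{n+1})$ while $d(p_{n+1},O)=d(p_{n+1},p_n)+d(p_n,O)$ and $d(O,q'_{n+1})=d(O,q'_n)+d(q'_n,q'_{n+1})$ along the rays from $O$. Hence $Cr\ge F_1+G_1>0$, strictness at the first stage coming from the strict triangle inequality ($O$ cannot lie on the segment $[p_1,q'_1]$, else $q'_1\in\gamma_{\xi,\eta}$, contradicting that two distinct geodesics meet in at most one point). In particular, your stated reason for abandoning this route --- that preventing the excess from decaying to $0$ ``rests on the no-focal-points hypothesis'' --- is mistaken: the excess is monotone for free, with no curvature input at all.

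What your Busemann-function route buys is structure: it identifies $Cr$ as the alternating sum of the Gromov-product constants $c_i$ and yields the quantitative bound $Cr\ge (b_\xi+b_{\eta'})(O)-c_3>0$, isolating exactly where the geometry enters (convexity of Busemann functions, uniqueness of the asymptotic ray from a point, and the flat strip theorem). Three small repairs are needed. First, the convexity of Busemann functions should be derived from the no-focal-points hypothesis itself --- absence of focal points is equivalent to convexity of geodesic spheres, so each $d(\cdot,q)$ is convex and $b_\xi$ is a pointwise limit of convex functions --- rather than cited from \cite{Eb1}, which treats nonpositive curvature. Second, and relatedly, the joint convexity of $t\mapsto d(\gamma_1(t),\gamma_2(t))$ you invoke in your closing aside is \emph{not} a consequence of no focal points (it requires nonpositive curvature); fortunately your main argument only uses one-variable convexity, which is valid. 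Third, your parenthetical ``two distinct geodesics sharing a boundary point would coincide'' is false as stated (distinct asymptotic geodesics share an endpoint); what you actually use, correctly, is that two geodesics passing through a \emph{common point} and sharing an endpoint at infinity must coincide, by uniqueness of the asymptotic ray from that point.
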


The following pictures give a geometric explanation of the cross ratio. Consider a quadrilateral $\mbox{Quad}(\xi, \eta, \xi', \eta')$. Connect each pair $(\xi,\eta), (\xi, \eta'), (\xi', \eta), (\xi', \eta')$ by rank $1$ geodesics, denoted by $\gamma_{\xi,\eta}, \gamma_{\xi, \eta'}, \gamma_{\xi', \eta}, \gamma_{\xi', \eta'}$, respectively.
Take a tangent vector $v_0$ on $\gamma_{\xi,\eta}$ pointing to the positive direction of $\gamma_{\xi,\eta}$. Let $v_1 = W^s(v_0)\cap(\gamma_{\xi', \eta}(t),\gamma'_{\xi', \eta}(t))_{t\in\mathbb{R}}$. Obviously the foot point of $v_1$ is the intersection of $\gamma_{\xi', \eta}$ and $\mbox{Hor}^+(v_0)$.   Similarly, take $v_2=W^u(v_1)\cap(\gamma_{\xi', \eta'}(t),\gamma'_{\xi', \eta'}(t))_{t\in\mathbb{R}}$, whose foot point is exactly the intersection point of $\mbox{Hor}^-(v_1)$ and $\gamma_{\xi',\eta'}$. Take $v_3=W^s(v_2)\cap(\gamma_{\xi, \eta'}(t),\gamma'_{\xi, \eta'}(t))_{t\in\mathbb{R}}$ and $v_4=W^u(v_3)\cap(\gamma_{\xi, \eta}(t),\gamma'_{\xi, \eta}(t))_{t\in\mathbb{R}}$. Therefore $v_4=\phi^{\tau}(v_0)$ for some $\tau\in\mathbb{R}$. One can check that $\tau$, indicated as the green segment, is independent of the choice of $v_0$ and is exactly the cross ratio $Cr(\xi,\eta,\xi',\eta')$ (cf.~\cite{B}).
	\begin{figure}[hb]
		\centering
		\includegraphics[width=0.8\textwidth]{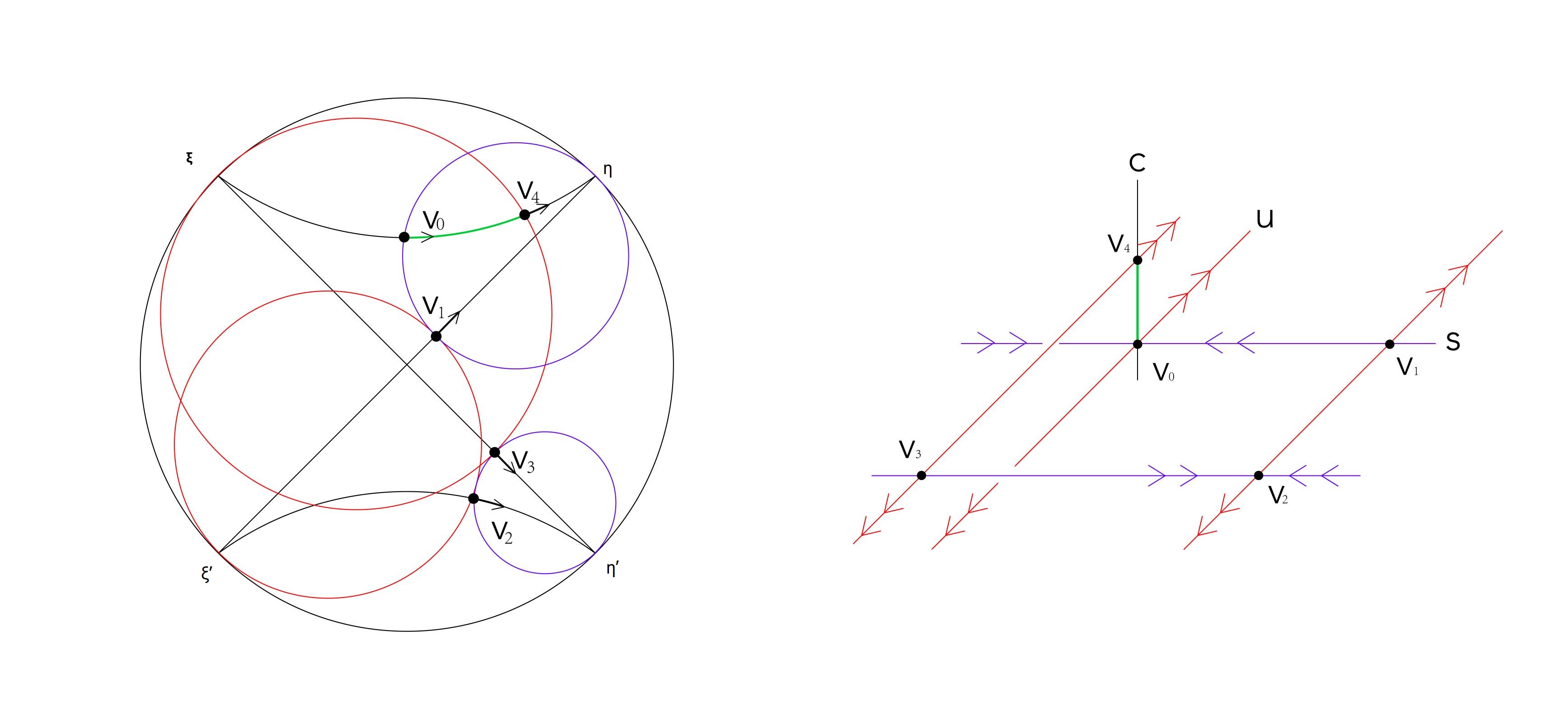}
	\end{figure}

Consider a bounded measurable function $f: SX\rightarrow \mathbb{R}$.
We say $f$ is $\mathbf{W^s ~invariant}$ if there is a full $\mu_{\max}$ measure subset $U\subset SX$, such that for all $v,w\in U$, $w\in W^s(v)$, $f(v)=f(w)$. Similarly we can define the  $W^u$ invariant functions. A $\mathbf{W^{s,u} ~invariant}$ function is both $W^u$ and $W^s$ invariant. The following lemma states that if $f$ is $W^{s,u}$ invariant and continuous on $\overline{\mu}$-almost every trajectory, then it is a periodic function on $\overline{\mu}$-almost every trajectory.

\begin{lemma}\label{period}
If a measurable function $f: SX\rightarrow \mathbb{R}$ is  $W^{s,u}$ invariant and continuous on $\overline{\mu}$-almost every trajectory,
then $f$ is periodic on $\overline{\mu}$-almost every trajectory of the geodesic flow.
\end{lemma}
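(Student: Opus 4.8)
The plan is to realize the flow-translation $\phi^{\tau}$ as a composition of stable and unstable holonomies along the four sides of a quadrilateral, so that the $W^{s,u}$-invariance of $f$ forces $f(v_0)=f(\phi^{\tau}v_0)$ with $\tau$ equal to a cross ratio. Fix a $\overline{\mu}$-generic pair $(\xi,\eta)$ and a quadrilateral $\mathrm{Quad}(\xi,\eta,\xi',\eta')$. The geometric description of the cross ratio preceding Corollary \ref{continuous} shows that the holonomy chain
\[
v_0 \xrightarrow{\ W^s\ } v_1 \xrightarrow{\ W^u\ } v_2 \xrightarrow{\ W^s\ } v_3 \xrightarrow{\ W^u\ } v_4 = \phi^{\tau}(v_0),
\]
with $v_0,v_4\in\gamma_{\xi,\eta}'$, $v_1\in\gamma_{\xi',\eta}'$, $v_2\in\gamma_{\xi',\eta'}'$, $v_3\in\gamma_{\xi,\eta'}'$, satisfies $\tau=Cr(\xi,\eta,\xi',\eta')$ \emph{independently of the choice of} $v_0$. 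Since $f$ is constant along $W^s$- and $W^u$-leaves inside the full-measure set $U$, whenever the whole chain lies in $U$ we obtain $f(v_0)=f(v_1)=\cdots=f(v_4)=f(\phi^{\tau}v_0)$. Choosing the quadrilateral to be cut out by two \emph{intersecting} rank $1$ geodesics makes $\tau>0$ by Proposition \ref{positive}, so once this relation is established for enough $v_0$ it yields a genuine period.

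The measure-theoretic heart is to guarantee that the entire chain stays in $U$ for Lebesgue-almost every $v_0\in\gamma_{\xi,\eta}'$. Here I would use that $\mu_{\max}$ disintegrates over $\overline{\mu}$ with arc length along the fibres $\gamma_{\xi,\eta}$, and that $\overline{\mu}$ is equivalent to $\mu_p\otimes\mu_p$ with $\mathrm{Supp}(\mu_p)=X(\infty)$. By Fubini the set $G$ of pairs $(\zeta,\omega)$ for which Lebesgue-a.e.\ point of $\gamma_{\zeta,\omega}$ lies in $U$ has full $\mu_p\otimes\mu_p$-measure, and for a.e.\ $(\xi,\eta)$ the slices $\{\zeta:(\zeta,\eta)\in G\}$ and $\{\omega:(\xi,\omega)\in G\}$ are again full. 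One then selects $\xi'$, and afterwards $\eta'$, from full-measure subsets of small neighborhoods of $\xi$ and $\eta$ (using the openness of $\mathcal{Q}$ near the diagonal) so that all four pairs $(\xi,\eta),(\xi',\eta),(\xi',\eta'),(\xi,\eta')$ lie in $G$ and the quadrilateral is of the intersecting type. The key structural point that makes this bookkeeping work is that each strong stable (resp.\ unstable) holonomy in the chain preserves the level sets of the Busemann function centered at the \emph{common} endpoint $\eta$ (resp.\ $\xi'$, $\eta'$, $\xi$), and is therefore a translation in the arc-length coordinate, hence absolutely continuous; consequently it carries a Lebesgue-full subset of one fibre into a Lebesgue-full subset of the next. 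Combining the four absolute-continuity statements with the fullness of $G$ on each of the four fibres shows that $v_0,v_1,v_2,v_3,v_4\in U$ for a.e.\ $v_0\in\gamma_{\xi,\eta}'$, whence $f(v_0)=f(\phi^{\tau}v_0)$ for a.e.\ $v_0$.

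Finally I would upgrade this almost-everywhere identity to genuine periodicity using the continuity hypothesis: parametrizing the trajectory by $t\mapsto\phi^{t}(v_\ast)$, the function $t\mapsto f(\phi^{t}v_\ast)-f(\phi^{t+\tau}v_\ast)$ is continuous on $\overline{\mu}$-a.e.\ trajectory and vanishes for Lebesgue-a.e.\ $t$, hence vanishes identically; thus $f$ is $\tau$-periodic along $\gamma_{\xi,\eta}$. I expect the main obstacle to be precisely the measure-theoretic step of the second paragraph---pushing the full-measure condition through all four holonomies simultaneously---and the observation that resolves it is the identification of the strong stable/unstable holonomies with Busemann-level-preserving translations, which supplies their absolute continuity for free and, at the same time, is exactly what packages their net effect into the single constant $\tau=Cr(\xi,\eta,\xi',\eta')$.
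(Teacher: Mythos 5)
Your proposal is correct, and its geometric engine is exactly the paper's: the holonomy chain $v_0\to v_1\to v_2\to v_3\to v_4=\phi^{\tau}(v_0)$ around a quadrilateral, with $\tau=Cr(\xi,\eta,\xi',\eta')$, combined with the $W^{s,u}$-invariance of $f$ along the four legs. The genuine difference is in the measure-theoretic implementation, and there your version is tighter. The paper introduces the set $E$ of pairs $(\xi,\eta)\in\mathcal{R}$ whose \emph{entire} trajectory $\gamma'_{\xi,\eta}$ lies in the invariance set $U$, asserts that $E$ has full $\overline{\mu}$-measure, and then chooses $\xi',\eta'$ so that all four sides of the quadrilateral lie in $E$; granting that assertion, $f(v_0)=f(\phi^{\tau}v_0)$ holds for \emph{every} $v_0$ on the trajectory, and the continuity hypothesis is never invoked. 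However, full $\mu_{\max}$-measure of $U$ only yields, via the disintegration (\ref{e:def of max entropy measure}) and Fubini, that for $\overline{\mu}$-a.e.\ pair, Lebesgue-a.e.\ point of the fibre lies in $U$ --- this is your set $G$, not the paper's $E$ (a set of full measure can miss a point on every single trajectory). Your two extra ingredients repair precisely this: the stable/unstable holonomies preserve the Busemann levels of the common endpoint, hence are arc-length translations between fibres, hence carry Lebesgue-full subsets of one fibre to Lebesgue-full subsets of the next, giving $f(v_0)=f(\phi^{\tau}v_0)$ for Lebesgue-a.e.\ $v_0$; and the continuity-along-trajectories hypothesis then upgrades an a.e.\ identity between two continuous functions of $t$ to an identity for all $t$. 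So your route is the one that actually uses every hypothesis of the lemma as stated, buying rigor at the cost of length, while the paper's route is shorter but leans on an unproved fullness claim (equivalently, on a stronger reading of $W^{s,u}$-invariance than the one defined). One caveat: your remark that the quadrilateral can be taken ``of intersecting type'' so that $\tau>0$ by Proposition \ref{positive} is sound on surfaces, where linking of the endpoints in $X(\infty)$ forces intersection and is compatible with choosing $\xi',\eta'$ from full-measure sets, but in dimension at least $3$ intersection cannot be forced by such choices; this does not affect the lemma --- periodicity as stated survives even if $\tau=0$ --- and the paper itself defers positivity of the period to Lemma \ref{lem3}, where intersection is imposed separately.
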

\begin{proof}

	Let $U\subset SX$ be the set of rank $1$ vectors such that $\forall v\in U$, $f(v)=f(w)$ for all $w\in W^{u}(v)$ or $w\in W^{s}(v)$. Given $f$ is $W^{s,u}$ invariant, the following set is a $\overline{\mu}$-full measure set in $X(\infty) \times X(\infty)$: $$E=\{(\xi,\eta)\in \mathcal{R}~|~\gamma'_{\xi,\eta}(t)\in U,~ \forall~t\in \mathbb{R}\}.$$

For each $(\xi,\eta)\in E$, take $\xi',\eta'\in X(\infty)$ such that $(\xi, \eta'), (\xi', \eta), (\xi', \eta')\in E$. By the product structure of $\overline{\mu}$, the set of such elements $(\xi,\eta)\in E$ is a $\overline{\mu}$-full measure set. Let  $v_0$ be a tangent vector on $\gamma_{\xi,\eta}$ pointing to the positive direction of $\gamma_{\xi,\eta}$ (i.e. $v_0\in (\gamma'_{\xi,\eta}(t))_{t\in\mathbb{R}}$). Apply the procedure of the geometric construction of the cross ratio above, we can see $f(v_1)=f(v_0)$ as  $v_1 \in W^s(v_0)\cap E$. Similarly, we have $$f(v_4)=f(v_3)=f(v_2)=f(v_1).$$

Therefore $f(v_0)=f(v_4)$, which implies that  $f(v)=f(\phi^{\tau}(v))$ where $\tau=Cr(\xi,\eta,\xi',\eta')$, for all $v\in (\gamma'_{\xi,\eta}(t))_{t\in\mathbb{R}}$. Thus, the cross ratio $Cr(\xi,\eta,\xi',\eta')$ is a period of the function $t\rightarrow f(\gamma'_{\xi,\eta}(t))$. It follows that $f$ is periodic on almost all trajectories.
\end{proof}

\section{\bf{The proof of the mixing property}}\label{mix}
In this section, we give the proof of Theorem \ref{mixing}. This proof is inspired by M.~Babillot's work in \cite{B} and G.~Knieper's work in \cite{Kn1}.

\noindent\emph{Proof of Theorem \ref{mixing}}.
As we showed in \cite{LWW1}, the geodesic flow is ergodic with respect to the unique maximal entropy measure $\mu_{\max}$, with the regular set $\mbox{Reg}$ being a full measure set. We will prove $\mu_{\max}$ is mixing by contradiction.

Assume $\mu_{\max}$ is not mixing, there exists a continuous function $f: SM\rightarrow \mathbb{R}$ with $\int f d\mu_{\max}=0$ but $f \circ \phi^{t}$ doesn't converge to $0$ in the weak $L^{2}$-topology (cf.~\cite{B}).
By Lemma \ref{Babillot}, there is a function $\psi\in L^2(SM,\mu_{\max})$ which is not $\mu_{\max}$-almost everywhere constant, and a sequence numbers $\{s_{n}\}$ approaching to positive infinity such that under the weak $L^{2}$-topology:
$f \circ \phi^{s_{n}} \rightarrow \psi~$ and $f\circ \phi^{-s_{n}} \rightarrow \psi$.

Then, by the Banach-Saks Theorem, there are sub-sequences $\{s_{n_j}\}$ and $\{s'_{n_j}\}$ approaching to positive infinity, such that the Cesaro averages $$\psi^+_k(v):=\frac{1}{k}\sum_{j=1}^k f(\phi^{s_{n_j}})~~~\&~~~\psi^-_k(v):=\frac{1}{k}\sum_{j=1}^k f(\phi^{-s'_{n_j}})$$ approach to $\phi$, $\mu_{\max}$ almost surely.

Let $\psi^+=\limsup_{k\rightarrow \infty}\psi^+_k$ and $\psi^-=\limsup_{k\rightarrow \infty}\psi^-_k$. We have $$\psi(v)=\psi^+(v)=\psi^-(v),~~\mbox{for}~\mu_{\max}~\mbox{a.e.}~v\in SM.$$ Moreover, $\psi^+(u)=\psi^+(v)$ when $u,v\in SM$ are positively asymptotic; and $\psi^-(u)=\psi^-(v)$ when $u,v$ are negatively asymptotic.

Now let $\tilde{\psi}$, $\tilde{\psi}^+$ and $\tilde{\psi}^-$ on $SX$ denote the lifting of $\psi$, $\psi^+$ and $\psi^-$. We can see the above properties also hold for the lifted functions. Note that $\psi$ cannot be almost everywhere constant on almost all trajectories, otherwise by the ergodicity of $\mu_{\max}$, $\psi$ is in fact almost everywhere constant, contradicting to our assumption. Neither is $\tilde{\psi}$.
	
Smoothen $\tilde{\psi}$ by considering $\tilde{\psi}^*(v):=\int_0^{\varepsilon} \tilde{\psi}(\phi^t) dt$ for some $\varepsilon>0$. The function $\tilde{\psi}^*$ is continuous along trajectories. In addition, by taking $\varepsilon>0$ small enough, we can make sure that $\tilde{\psi}^*$ is not constant on almost all trajectories. We apply this procedure to $\tilde{\psi}^+$ and $\tilde{\psi}^-$ with the same $\varepsilon$ too to get $\tilde{\psi}^{*+}$ and $\tilde{\psi}^{*-}$ respectively. Then the following properties hold:
\begin{enumerate}
\item $\tilde{\psi}^*$, $\tilde{\psi}^{*+}$ and $\tilde{\psi}^{*-}$ are $\Gamma$-invariant functions on $SX$.
\item $\tilde{\psi}^*(v)=\tilde{\psi}^{*+}(v)=\tilde{\psi}^{*-}(v)$ for $\mu_{\max}$ almost every $v\in SX$;
\item If $u,v\in SX$ are positively asymptotic, then $\tilde{\psi}^{*+}(u)=\tilde{\psi}^{*+}(v)$. If $u,v\in SX$ are negatively asymptotic, then $\tilde{\psi}^{*-}(u)=\tilde{\psi}^{*-}(v)$.
\end{enumerate}
Here as we have indicated before, we use $\mu_{\max}$ to denote the lifting of $\mu_{\max}$ on $SX$. Since $\tilde{\psi}^*$, $\tilde{\psi}^{*+}$ and $\tilde{\psi}^{*-}$ are continuous on each trajectory, $\tilde{\psi}^*=\tilde{\psi}^{*+}=\tilde{\psi}^{*-}$ on almost all trajectories.

Now, take an arbitrary rank $1$ axis $c$ of some non-elementary $\alpha\in\Gamma$. By Proposition \ref{pro6}, we have an open neighborhood $U\subset X(\infty)$ of $c(-\infty)$ and an open neighborhood $V\subset X(\infty)$ of $c(+\infty)$, such that for any $\xi\in U$ and $\eta\in V$ there is a unique rank $1$ geodesic $\gamma_{\xi,\eta}$ with $\gamma_{\xi,\eta}(-\infty)=\xi$ and $\gamma_{\xi,\eta}(+\infty)=\eta$. Fix $U$ and $V$, let
$$\mathcal{G}(U,V)=\{c \mid c~ \mbox{is a geodesic with} ~c(-\infty)\in U ~\mbox{and} ~c(+\infty)\in V\},$$
$$\mathcal{G}_{rec}(U,V)=\{c\in \mathcal{G}(U,V)~\mid ~c' ~\mbox{is recurrent}\},$$
and
$$\tilde{\mathcal{G}}_{rec}(U,V)=\{c\in \mathcal{G}_{rec}(U,V)~\mid  ~\tilde{\psi}^*(v)=\tilde{\psi}^{*+}(v)=\tilde{\psi}^{*-}(v)~\mbox{on}~c\}.$$
Here we say $v\in SX$ is recurrent if $v$ is a lifting vector of a recurrent vector in $SM$ under the geodesic flow (or equivalently, there exist sequences $s_n\rightarrow +\infty$ and $\{\alpha_n\}\subset\Gamma$ with $\alpha_n(\phi^{s_n}(v))\rightarrow v$ as $n\rightarrow +\infty$).

Similarly, let
$$\mathcal{G}'(U,V)=\{c'(t) ~|~ c\in \mathcal{G}(U,V),~t\in\mathbb{R} \},$$
$$\mathcal{G}'_{rec}(U,V)=\{c'(t) ~|~ c\in \mathcal{G}_{rec}(U,V),~t\in\mathbb{R} \}$$
and
$$\tilde{\mathcal{G}}'_{rec}(U,V)=\{c'(t) ~|~ c\in \tilde{\mathcal{G}}_{rec}(U,V),~t\in\mathbb{R} \}.$$

By Poincar\'e recurrence theorem, we know $$\mu_{\max}(\mathcal{G}'(U,V)\setminus \mathcal{G}'_{rec}(U,V))=0.$$ Also, as we discussed earlier, we have $$\mu_{\max}(\mathcal{G}'(U,V)\setminus \tilde{\mathcal{G}}'_{rec}(U,V))=0.$$

\begin{lemma}\label{lem1}
For $\mu_p$ almost all $\xi\in U$ the set $$G_{\xi}:=\{\eta\in V~\mid~\exists~c\in\tilde{\mathcal{G}}_{rec}(U,V)~\mbox{with}~\eta=c(-\infty)\}$$ is a $\mu_p$-full measure set in $V$. Here $\mu_p$ is the Patterson-Sullivan measure on $X(\infty)$.
\end{lemma}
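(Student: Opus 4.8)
The plan is to read Lemma \ref{lem1} as a Fubini slicing statement for the product measure underlying the geodesic current. The two displayed identities immediately preceding the lemma already supply the input, namely that $\mathcal{G}'(U,V)\setminus\tilde{\mathcal{G}}'_{rec}(U,V)$ is a $\mu_{\max}$-null set. The set $G_\xi$ is precisely the $\xi$-slice of the \emph{good} pairs $\{(\xi,\eta)\in U\times V : \gamma_{\xi,\eta}\in\tilde{\mathcal{G}}_{rec}(U,V)\}$ (here $\gamma_{\xi,\eta}$ is the unique rank $1$ geodesic from $\xi$ to $\eta$ furnished by Proposition \ref{pro6}). So once this null set is pushed down to $X(\infty)\times X(\infty)$ through $P$, transferred from the current $\overline{\mu}$ to the genuine product $\mu_p\otimes\mu_p$, and sliced by the classical Fubini theorem, the assertion drops out.

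First I would push the null set down to the boundary via the projection $P(v)=(\gamma_v(-\infty),\gamma_v(+\infty))$. Both $\mathcal{G}'(U,V)$ and $\tilde{\mathcal{G}}'_{rec}(U,V)$ are unions of full flow trajectories, hence flow-invariant, and both sit inside $\mbox{Reg}$, where each fiber $P^{-1}(\xi,\eta)$ is a single rank $1$ geodesic (by the flat strip discussion following \eqref{e:def of max entropy measure}). For a flow-invariant $A\subset\mbox{Reg}$ the slice $P^{-1}(\xi,\eta)\cap A$ is either the entire fiber or empty, so the integrand $\mbox{Vol}(\pi(P^{-1}(\xi,\eta)\cap A))$ in \eqref{e:def of max entropy measure} is strictly positive exactly on $P(A)$; consequently $\mu_{\max}(A)=0$ if and only if $\overline{\mu}(P(A))=0$. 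Applying this to $A=\mathcal{G}'(U,V)\setminus\tilde{\mathcal{G}}'_{rec}(U,V)$ yields $\overline{\mu}(B)=0$, where $B:=\{(\xi,\eta)\in U\times V : \gamma_{\xi,\eta}\notin\tilde{\mathcal{G}}_{rec}(U,V)\}$.

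Next I would transfer this to the product measure and slice. By \eqref{current} the current decomposes as $d\overline{\mu}(\xi,\eta)=e^{h\cdot\beta_p(\xi,\eta)}\,d\mu_p(\xi)\,d\mu_p(\eta)$, and the Radon--Nikodym factor $e^{h\cdot\beta_p(\xi,\eta)}$ is strictly positive and finite on every pair connected by a geodesic; hence $\overline{\mu}$ and $\mu_p\otimes\mu_p$ are mutually absolutely continuous and share the same null sets, so $(\mu_p\otimes\mu_p)(B)=0$. Fubini's theorem then produces a $\mu_p$-full set of $\xi\in U$ for which the slice $B_\xi=\{\eta\in V:(\xi,\eta)\in B\}$ is $\mu_p$-null in $V$; for each such $\xi$ the complement $G_\xi=V\setminus B_\xi$ has full $\mu_p$-measure in $V$, which is exactly the claim.

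The step I expect to be the main obstacle, and the only one requiring genuine care rather than bookkeeping, is the faithful correspondence between $\mu_{\max}$-null flow-invariant subsets of $SX$ and $\overline{\mu}$-null sets of endpoint pairs. This rests on the fact that over $\mbox{Reg}$ the map $P$ has connected single-geodesic fibers, so no flat strip collapses distinct trajectories onto one boundary pair and the volume factor in \eqref{e:def of max entropy measure} is strictly positive on every fiber meeting $A$; this is precisely where the rank $1$ hypothesis and the flat strip theorem enter, and it is what prevents the projection from creating or destroying null sets. The remaining ingredients, namely the flow invariance of the sets involved, the positivity of $e^{h\cdot\beta_p}$, and the application of Fubini, are routine.
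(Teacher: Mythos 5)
Your proof is correct and follows essentially the same route as the paper: both reduce the lemma to the $\mu_{\max}$-nullity of the exceptional set established just before the lemma, express that measure through the integral formula \eqref{e:def of max entropy measure}, use positivity of the integrand (single rank $1$ geodesic fibers over $\mbox{Reg}$, plus positivity of $e^{h\beta_p}$, i.e.~equivalence of $\overline{\mu}$ with $\mu_p\otimes\mu_p$), and finish with Fubini. The only difference is cosmetic and in your favor: you work with the full difference set $\mathcal{G}'(U,V)\setminus\tilde{\mathcal{G}}'_{rec}(U,V)$, whereas the paper's proof takes $\mathcal{E}'=\mathcal{G}'_{rec}(U,V)\setminus\tilde{\mathcal{G}}'_{rec}(U,V)$, which strictly speaking misses the (also null, by Poincar\'e recurrence) pairs whose connecting geodesic fails to be recurrent, even though these belong to $G^{co}_{\xi}$.
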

\begin{proof}
Let $\mathcal{E}'=\mathcal{G}'_{rec}(U,V)\setminus\mathcal{\widetilde{G}}'_{rec}(U,V)$, we have $\mu_{\max}(\mathcal{E}')=0$. For each $\xi\in U$, denote $G^{co}_{\xi}= V\setminus G_{\xi}$.
From the construction of $\mu_{\max}$ (see (\ref{e:def of max entropy measure})), we have
\begin{eqnarray*}
0 = \mu(\mathcal{E}')
& = & \int_{P(\mathcal{E}')}\text{Vol}(\pi\circ P^{-1}(\xi,\eta)\cap \mathcal{E}')e^{h\beta_{p}(\xi,\eta)}d\mu_{p}(\xi) d\mu_{p}(\eta)\\
& \geq & \int_{\xi\in U,~\eta\in G^{co}_{\xi}}d\mu_{p}(\xi) d\mu_{p}(\eta)=\int_{\xi\in U}\left(\int_{G^{co}_{\xi}}d\mu_{p}(\eta)\right)d\mu_{p}(\xi).
\end{eqnarray*}
Therefore, $$\int_{\xi\in U}(\int_{G^{co}_{\xi}}d\mu_{p}(\eta))d\mu_{p}(\xi)=0.$$

This implies that for  $\mu_{p}$-a.e.~$\xi \in U$, $\mu_{p}(G^{co}_{\xi})=0$. Thus, $\mu_{p}(G_{\xi})=\mu_{p}(V)$ for $\mu_{p}$-a.e.~$\xi \in U$.
\end{proof}

Based on lemma \ref{lem1}, we have
\begin{lemma}\label{lem2}
$\tilde{\psi}^*$ is periodic on almost all trajectories in $\tilde{\mathcal{G}}'_{rec}(U,V)$.
\end{lemma}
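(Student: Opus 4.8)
The plan is to transplant the cross-ratio argument of Lemma \ref{period} into the present setting, where in place of a single $W^{s,u}$-invariant function we now have the three functions $\tilde{\psi}^*$, $\tilde{\psi}^{*+}$, $\tilde{\psi}^{*-}$. The point is that $\tilde{\psi}^{*+}$ is constant on positively asymptotic vectors and $\tilde{\psi}^{*-}$ is constant on negatively asymptotic vectors, while all three functions coincide along every trajectory in $\tilde{\mathcal{G}}'_{rec}(U,V)$; the four sides of a quadrilateral will then let me switch between the plus- and minus-functions using these coincidences.

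First I would fix a trajectory $c\in\tilde{\mathcal{G}}_{rec}(U,V)$ with $\xi=c(-\infty)\in U$ and $\eta=c(+\infty)\in V$, and select $\xi'\in U$, $\eta'\in V$ so that the four connecting geodesics $\gamma_{\xi,\eta}$, $\gamma_{\xi',\eta}$, $\gamma_{\xi',\eta'}$, $\gamma_{\xi,\eta'}$ all lie in $\tilde{\mathcal{G}}_{rec}(U,V)$. Because $\overline{\mu}$ is equivalent to $\mu_p\otimes\mu_p$ and the set of good pairs is $\overline{\mu}$-full, Lemma \ref{lem1} together with a Fubini argument shows that, for $\mu_p$-almost every admissible $(\xi,\eta)$, the set of $(\xi',\eta')$ making all four of $(\xi,\eta),(\xi',\eta),(\xi',\eta'),(\xi,\eta')$ good has full $\mu_p\otimes\mu_p$ measure. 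Since $\xi,\xi'\in U$ and $\eta,\eta'\in V$, Proposition \ref{pro6} guarantees that $(\xi,\eta,\xi',\eta')$ is a genuine quadrilateral, so the geometric construction preceding Lemma \ref{period} applies.

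I would then run that construction: starting from $v_0$ on $\gamma_{\xi,\eta}$, set $v_1=W^s(v_0)\cap\gamma'_{\xi',\eta}$, $v_2=W^u(v_1)\cap\gamma'_{\xi',\eta'}$, $v_3=W^s(v_2)\cap\gamma'_{\xi,\eta'}$ and $v_4=W^u(v_3)\cap\gamma'_{\xi,\eta}$, so that $v_4=\phi^{\tau}(v_0)$ with $\tau=Cr(\xi,\eta,\xi',\eta')$. The telescoping chain
\begin{align*}
\tilde{\psi}^{*+}(v_0) &=\tilde{\psi}^{*+}(v_1)=\tilde{\psi}^{*-}(v_1)=\tilde{\psi}^{*-}(v_2)=\tilde{\psi}^{*+}(v_2)\\
&=\tilde{\psi}^{*+}(v_3)=\tilde{\psi}^{*-}(v_3)=\tilde{\psi}^{*-}(v_4)=\tilde{\psi}^{*+}(v_4)
\end{align*}
then holds, where each equality between two plus-values uses positive asymptoticity, each between two minus-values uses negative asymptoticity, and each passage from a plus- to a minus-value uses the coincidence $\tilde{\psi}^{*+}=\tilde{\psi}^{*-}$ on the corresponding good side $\gamma_{\xi',\eta}$, $\gamma_{\xi',\eta'}$, $\gamma_{\xi,\eta'}$, $\gamma_{\xi,\eta}$. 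As $\tilde{\psi}^*=\tilde{\psi}^{*+}$ on $\gamma_{\xi,\eta}$, this yields $\tilde{\psi}^*(v_0)=\tilde{\psi}^*(\phi^{\tau}(v_0))$, with $\tau>0$ by Proposition \ref{positive} and independent of the base point $v_0$; hence $\tilde{\psi}^*$ is periodic with period $\tau$ along $c$.

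The step I expect to be the main obstacle is arranging for all four sides of the quadrilateral to be good simultaneously, since the coincidence $\tilde{\psi}^{*+}=\tilde{\psi}^{*-}$ must be available at each of $v_1,v_2,v_3$ and not merely at $v_0$. This is exactly the purpose of Lemma \ref{lem1}; the remaining work is the measure-theoretic bookkeeping needed to intersect the four full-measure conditions while keeping the conclusion valid for $\overline{\mu}$-almost every trajectory in $\tilde{\mathcal{G}}'_{rec}(U,V)$.
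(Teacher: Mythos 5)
Your proposal follows essentially the same route as the paper's own proof: invoke Lemma \ref{lem1} to choose $\xi',\eta'$ so that all four sides of the quadrilateral lie in $\tilde{\mathcal{G}}_{rec}(U,V)$, run the geometric cross-ratio construction $v_0\mapsto v_1\mapsto v_2\mapsto v_3\mapsto v_4=\phi^{\tau}(v_0)$, and telescope the identities $\tilde{\psi}^{*}=\tilde{\psi}^{*+}=\tilde{\psi}^{*-}$ along the good sides (using positive/negative asymptoticity across $W^s$/$W^u$ at each corner) to conclude that $\tau=Cr(\xi,\eta,\xi',\eta')$ is a period of $\tilde{\psi}^*$ along $c$.

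One caveat: your closing claim that $\tau>0$ "by Proposition \ref{positive}" is not justified, since that proposition requires the two diagonal geodesics $\gamma_{\xi,\eta}$ and $\gamma_{\xi',\eta'}$ to intersect, which your generic (Fubini-type) choice of $(\xi',\eta')$ does not arrange, particularly in dimension greater than two; the paper makes no positivity claim at this stage, and secures strictly positive (and arbitrarily small) periods only in Lemma \ref{lem3}, precisely by requiring the approximating geodesics $\gamma_{\xi_n,\eta_n}$ to intersect $c$.
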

\begin{proof}
Take a geodesic $c\in\tilde{\mathcal{G}}_{rec}(U,V)$ with $c(-\infty)=\xi$ and $c(+\infty)=\eta$ with $\mu_p(V\setminus G_{\xi})=0.$ By Lemma \ref{lem1}, $\mu_p$-almost all $c\in\tilde{\mathcal{G}}_{rec}(U,V)$ fulfills this requirement. Take arbitrary $\xi',\eta'\in X(\infty)$ such that $$\gamma_{\xi, \eta'}, \gamma_{\xi', \eta}, \gamma_{\xi', \eta'}\in \tilde{\mathcal{G}}_{rec}(U,V).$$ The existence of $\xi', \eta'$ are also guaranteed by Lemma \ref{lem1}.

Let $v_0$ be a tangent vector on $\gamma_{\xi,\eta}$ pointing to the positive direction of $\gamma_{\xi,\eta}$ (i.e. $v_0\in (\gamma'_{\xi,\eta}(t))_{t\in\mathbb{R}}$). Recall the procedure of the geometric construction of the cross ratio, as reasoned in Lemma \ref{period}, we  have that $$\tilde{\psi}^*(v_1)=\tilde{\psi}^{*+}(v_1)=\tilde{\psi}^{*+}(v_0)=\tilde{\psi}^*(v_0),$$ since  $v_1 \in W^s(v_0)$ and $v_0$ is a recurrent vector.

Similarly, we have $$\tilde{\psi}^*(v_4)=\tilde{\psi}^*(v_3)=\tilde{\psi}^*(v_2)=\tilde{\psi}^*(v_1).$$
Therefore $\tilde{\psi}^*(v_0)=\tilde{\psi}^*(v_4)$, leading to  $\tilde{\psi}^*(v)=\tilde{\psi}^*(\phi^{T}(v))$ where $T=Cr(\xi,\eta,\xi',\eta')$ is the cross ratio, for all $v\in (c'(t))_{t\in\mathbb{R}}$. Thus $Cr(\xi,\eta,\xi',\eta')$ is a period of the function $t\mapsto \tilde{\psi}^*(c'(t))$ on the trajectory $(c,c')$, implying that $\tilde{\psi}^*$ is periodic on it.
\end{proof}

Note that in this proof, we showed that in fact the cross ratio is a period of $\tilde{\psi}^*$ on the corresponding trajectory. This observation plays a key point in our next argument.

\begin{lemma}\label{lem3}
$\tilde{\psi}^*$ is constant on almost all trajectories in $\tilde{\mathcal{G}}'_{rec}(U,V)$.
\end{lemma}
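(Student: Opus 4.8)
The plan is to fix a trajectory $(c,c')$ with $c=\gamma_{\xi,\eta}\in\tilde{\mathcal{G}}_{rec}(U,V)$ taken from the almost-full-measure set provided by Lemma \ref{lem2}, and to study the single-variable function $g(t):=\tilde{\psi}^*(c'(t))$. By construction $g$ is continuous, since $\tilde{\psi}^*$ is continuous along trajectories. Lemma \ref{lem2} gives more than periodicity: for \emph{every} admissible pair $(\xi',\eta')$ --- meaning $\gamma_{\xi,\eta'},\gamma_{\xi',\eta},\gamma_{\xi',\eta'}\in\tilde{\mathcal{G}}_{rec}(U,V)$ --- the cross ratio $Cr(\xi,\eta,\xi',\eta')$ is a period of $g$. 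So the strategy is to show that this collection of cross-ratio periods is rich enough to force $g$ to be constant.

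First I would record the elementary fact that the set $\mathrm{Per}(g)$ of all periods of a continuous $g:\mathbb{R}\to\mathbb{R}$ is a closed additive subgroup of $\mathbb{R}$: if periods $T_n\to T$, then $g(x+T)=\lim_n g(x+T_n)=g(x)$. Hence $\mathrm{Per}(g)$ is either $\{0\}$, a discrete group $a\mathbb{Z}$ with $a>0$, or all of $\mathbb{R}$, and only in the last case is $g$ constant. It therefore suffices to produce positive periods accumulating at $0$, i.e. to show $\inf\{T\in\mathrm{Per}(g):T>0\}=0$, equivalently that the admissible cross ratios do not all lie in one discrete subgroup $a\mathbb{Z}$.

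Next I would analyze the map $(\xi',\eta')\mapsto Cr(\xi,\eta,\xi',\eta')$. By Lemma \ref{lem1} together with the product structure of $\overline{\mu}$, for $\mu_p$-almost every $\xi$ the admissible pairs form a dense (indeed full-measure) subset of the open set $\mathcal{Q}$ of quadrilaterals whose first geodesic is $\gamma_{\xi,\eta}$; on $\mathcal{Q}$ the cross ratio is continuous by Corollary \ref{continuous}. I would then show this function is non-constant and attains arbitrarily small positive values: as $\eta'\to\eta$ the quadrilateral degenerates, and from the geometric description $Cr=d_1+d_2-d_3-d_4$ one reads off $Cr\to 0$, while Proposition \ref{positive} guarantees $Cr>0$ on genuinely non-degenerate (intersecting) configurations. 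Along a continuous approach $\eta'\to\eta$ the intermediate value theorem then forces the range of $Cr$ over $\mathcal{Q}$ to contain an interval $(0,\delta)$. By density of the admissible pairs and continuity of $Cr$, the \emph{admissible} cross ratios are dense in this range. Since every admissible cross ratio lies in the closed group $\mathrm{Per}(g)$, that group contains points arbitrarily close to $0$, hence equals $\mathbb{R}$, so $g$ is constant. Running this for almost every $c$ gives that $\tilde{\psi}^*$ is constant on almost all trajectories in $\tilde{\mathcal{G}}'_{rec}(U,V)$.

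The main obstacle is the third step: ruling out that all cross-ratio periods sit inside a single discrete $a\mathbb{Z}$. The delicate point is that admissibility is only a full-measure condition, not an open one, so I cannot simply slide $(\xi',\eta')$ along a continuous path of admissible pairs; instead I must combine continuity of $Cr$ on the open set $\mathcal{Q}$, density of the admissible pairs, and closedness of $\mathrm{Per}(g)$ to transfer the non-discreteness of the full range of $Cr$ onto the subset of genuine periods. The geometric inputs that make this go through are the degeneration limit $Cr\to 0$ as $\eta'\to\eta$ --- which requires controlling the four horosphere-truncated lengths $d_i$ in that limit --- and strict positivity away from degeneration via Proposition \ref{positive}.
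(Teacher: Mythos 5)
Your proposal is correct and rests on the same core mechanism as the paper's proof: cross ratios of admissible quadrilaterals are periods of $g(t)=\tilde{\psi}^*(c'(t))$ (Lemma \ref{lem2}), such periods can be made arbitrarily small and positive, and continuity of $\tilde{\psi}^*$ along the trajectory then forces constancy. Where you genuinely differ is in how the small positive periods are produced. The paper argues sequentially: by Lemma \ref{lem1} it picks admissible pairs $(\xi_n,\eta_n)\to(\xi,\eta)$, additionally requires each $\gamma_{\xi_n,\eta_n}$ to intersect $c$ so that Proposition \ref{positive} gives $Cr(\xi,\xi_n,\eta,\eta_n)>0$, and then uses Corollary \ref{continuous} to get $Cr(\xi,\xi_n,\eta,\eta_n)\to Cr(\xi,\xi,\eta,\eta)=0$; a continuous function with positive periods tending to $0$ is constant, and no group-theoretic bookkeeping is needed. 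You instead run an intermediate value argument over all of $\mathcal{Q}$ (not just admissible configurations) to show the range of $Cr$ contains an interval $(0,\delta)$, then use density of the admissible pairs, continuity of $Cr$, and closedness of the period group to conclude that $\mathrm{Per}(g)$ contains $(0,\delta)$ and hence equals $\mathbb{R}$. Your detour has a cost: you must exhibit a continuous path of quadrilaterals inside $\mathcal{Q}$, starting from an intersecting configuration and degenerating to the diagonal, which requires a path-connectedness check (via Proposition \ref{pro6} and the cone topology) that the paper's sequential argument never needs. But it also buys something real: you never require the admissible (full-measure) configurations themselves to intersect $c$ --- only to approximate, in $\mathcal{Q}$, configurations with known positive cross ratio --- whereas the paper's step ``we can also require that each geodesic $\gamma_{\xi_n,\eta_n}$ intersects $c$'' is delicate when $\dim M\geq 3$, since intersecting a fixed geodesic is then not an open condition and a full-measure set of pairs need not contain any such configuration. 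In that sense your density-plus-closed-subgroup transfer is a more robust implementation of the same underlying idea.
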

\begin{proof}
Take a geodesic $c\in\tilde{\mathcal{G}}_{rec}(U,V)$ with $c(-\infty)=\xi$ and $c(+\infty)=\eta$ such that $\mu_p(V\setminus G_{\xi})=0$. This property holds for $\mu_p$-almost all $c\in\tilde{\mathcal{G}}_{rec}(U,V)$. From the definition, we have $Cr(\xi,\xi,\eta,\eta)=0.$ By Lemma \ref{lem1} and the construction of $\mu_p$, we can find sequences $\xi_n\rightarrow \xi\in U$ and $\eta_n\rightarrow \eta \in V$ with $\gamma_{\xi_n,\eta_n}, \gamma_{\xi,\eta_n}, \gamma_{\xi_n,\eta} \in\tilde{\mathcal{G}}_{rec}(U,V)$. Moreover, we can also require that each geodesic $\gamma_{\xi_n,\eta_n}$ intersects $c$.

Proposition \ref{positive} implies that $Cr(\xi,\xi_n,\eta,\eta_n)$ is strictly positive for all $n\in\mathbb{Z}^+$. Using the continuity of the cross ratio (Corollary \ref{continuous}), we can conclude
$$Cr(\xi,\xi_n,\eta,\eta_n)\rightarrow Cr(\xi,\xi,\eta,\eta)=0~~\mbox{as}~n\rightarrow+\infty.$$

Thus these cross ratios as the positive period of $\tilde{\psi}^*$ on $(c,c')$ can be arbitrarily small, implying $\tilde{\psi}^*$ is constant on $(c,c')$ from the continuity of $\tilde{\psi}^*$ on this trajectory.
\end{proof}

Come back to the proof that $\mu_{\max}$ is mixing. Let $$\widetilde{V}=\{\eta\in V~\mid~\exists~c\in\tilde{\mathcal{G}}_{rec}(U,V),~c(+\infty)=\eta,~\mu_p(G_{c(-\infty)})=\mu_p(V)\}.$$

One can check that $\mu_p(\widetilde{V})=\mu_p(V)$. And if $c, \gamma\in\tilde{\mathcal{G}}_{rec}(U,V)$ are two geodesics with $c(+\infty)$=$\gamma(+\infty)$, $\tilde{\psi}^*$ is constant on $(c,c')$ implies it is also constant on  $(\gamma,\gamma')$, and vice verse.

Let $$Y=\bigcup_{\alpha\in\Gamma}\alpha(\widetilde{V}).$$ By Proposition \ref{corollary1}, $\mu_p(X(\infty)\setminus Y)=0$. So $Z:=\{c'(t)~\mid~c(-\infty)\in Y,~t\in\mathbb{R}\}$ is a $\mu_{\max}$ full measure set in $SX$. This implies that $\tilde{\psi}^*$ is constant on almost all trajectories in $SX$. By the $\Gamma$-invariance of $\tilde{\psi}^*$ and the ergodicity of $\phi^t$ with respect to $\mu_{\max}$, $\tilde{\psi}^*$ is constant $\mu_{\max}$-almost everywhere on $SX$, which contradicts to our assumption.

To summarize, the geodesic flow $\phi^t$ is mixing with respect to $\mu_{\max}$  the measure of maximal entropy. We complete the proof of Theorem \ref{mixing}.

\section{\bf{The Bernoulli property}}\label{proofBernoulli}

In this section, we give a brief proof to our Theorem \ref{Bernoulli}.

\noindent\emph{Proof of Theorem \ref{Bernoulli}}.
Given a compact surface, the sectional curvature at each point is the Gauss curvature. The Gauss-Bonnet formula suggests a region with all sectional curvatures be negative in it, provided that the surface has no focal points and its genus is greater than $1$. In the region, the surface admits rank $1$ geodesics, thus is a rank $1$ surface. We can say the geodesic flows on it has a unique maximal entropy measure $\mu_{\max}$ from theorem \ref{LWW}.

On the other hand, we know that the fundamental group of a compact surface with genus greater than $1$ has exponential growth rate.
The famous Dinaburg Theorem (cf. \cite{Di}) ensures that the topological entropy of geodesic flow is positive, as well as the maximal measure entropy. In fact, in \cite{LWW2}, we generalized Dinaburg Theorem from geodesic flows to more general autonomous Lagrangian systems.
And in \cite{LWW1}, we show that $\mu_{\max}$ is an ergodic measure with full mass on the rank $1$ set (also called the regular set).
Then by Theorem 1.2 of \cite{LLS}, the geodesic flow is Bernoulli with respect to the unique maximal entropy measure.
This wraps up the proof of Theorem \ref{Bernoulli}.

\section*{\textbf{Acknowledgements}}
F.~Liu is partially supported by NSFC under Grant Nos.11301305 and 11571207. F.~Wang is partially supported by NSFC under Grant No.11871045 and by the State Scholarship Fund from China Scholarship Council (CSC).

X.~Liu is under the supervision of Prof.~Zhihong Jeff Xia at Department of Mathematics,
Northwestern University, Evanston, IL, and would like to take this chance to acknowledge the constructive
advices and helps from Prof. Xia, as well as the great research environment and hospitality
offered by SUSTech with gratitude.



\begin{thebibliography}{99}


\bibitem{B} M. Babillot,
{\it On the mixing property for hyperbolic systems},
Israel J. Math. {\bf 129} (2002), 61--76.

\bibitem{Ba}
W. Ballmann, {\it Axial isometries of manifolds of nonpositive curvature}, Math. Annalen {\bf 259} (1982), 131--144.

\bibitem{BBE} W. Ballmann, M. Brin and P. Eberlein,
{\it Structure of manifolds of nonpositive curvature I},
Ann. of Math. (2) {\bf 122.1} (1985), 171--203.





\bibitem{BuKa} K. Burns and A. Katok,
{\it Manifolds with non-positive curvature},  Ergodic Theory Dynam. Systems,
{\bf 5} (1985), 307--317.



\bibitem{Di}
E. I. Dinaburg, {\it On the relations among various entropy characteristics of dynamical systems}, Math. USSR Izv., {\bf 5} (1971), 337--378.

\bibitem{Eb1} P. Eberlein,
{\it Geometry of nonpositively curved manifolds},
Chicago Lectures in Mathematics. University of Chicago Press, Chicago, IL, 1996.




\bibitem{Gu} R. Gulliver,
{\it On the variety of manifolds without conjugate points}, Trans. Amer. Math. Soc., {\bf 210} (1975), 185--201.






\bibitem{Kn1} G. Knieper,
{\it The uniqueness of the measure of maximal entropy for geodesic flows on rank $1$ manifolds}, Ann. of Math. (2), {\bf 148} (1998), 291--314.

\bibitem{Kn2} G. Knieper,
{\it Hyperbolic dynamics and Riemannian geometry}, Handbook of dynamical systems, Vol. 1A, 453--545, North-Holland, Amsterdam, 2002.

\bibitem{LLS} F. Ledrappier, Y. Lima and O. Sarig,
{\it Ergodic properties of equilibrium measures for smooth three dimensional flows}, Comment. Math. Helvetici, {\bf 91} (2016), 65--106.

\bibitem{LW} F. Liu and F. Wang,
{\it Entropy-expansiveness of geodesic flows on closed manifolds without conjugate points},
Acta Math. Sinica, {\bf 32} (2016), 507--520.

\bibitem{LWW1} F. Liu, F. Wang and W. Wu,
{\it On the Patterson-Sullivan measure for geodesic flows on rank $1$ manifolds without focal points},
preprint, 2017, submitted. 49 pages.

\bibitem{LWW2} F. Liu, F. Wang and W. Wu,
{\it The topological entropy for autonomous Lagrangian systems on compact manifolds whose fundamental groups have exponential growth},
preprint, 2017. (To appear on Sci. China Math.)

\bibitem{LWW3}
F. Liu, F. Wang and W. Wu, {\it On the ergodicity of geodesic flows on surfaces without focal points}, preprint, 2016, submitted. 23 pages.

\bibitem{LZ} F. Liu and X. Zhu,
{\it The transitivity of geodesic flows on rank 1 manifolds without focal points},
Diff. Geom. Appl., {\bf 60} (2018), 49--53.



\bibitem{Os}  J. O'Sullivan,
{\it Riemannian manifolds without focal points},  J. Differential Geometry,
{\bf 11} (1976), 321--333.

\bibitem{Pa} G. P. Paternain,
{\it Geodesic Flows}. Progress in Mathematics, \textbf{180}. Birkh\"{a}user Boston, Inc., Boston, MA, 1999.


\bibitem{Ru1} R. Ruggiero,
{\it Expansive geodesic flows in manifolds with no conjugate points}, Ergodic Theory Dynam. Systems, {\bf 17} (1997), 211--225.



\bibitem{Wal}
P. Walters, {\it An Introduction to Ergodic Theory}, Graduate Texts in Mathematics, \textbf{79}, Springer-Verlag, New York-Berlin, 1982.

\bibitem{Wat} J. Watkins,
{\it The higher rank rigidity theorem for manifolds with no focal points},
Geom. Dedicata {\bf 164} (2013), 319--349.



\end{thebibliography}
\end{document}